\shorttitle{L\'{e}vy driven Ornstein-Uhlenbeck process} % insert short title here for use in running head
\newcommand{\R}{\mathds R}
\newcommand{\Rd}{{\mathds R^d}}
\newcommand{\Pp}{\mathds P}
\newcommand{\Ee}{\mathds E}
\newcommand{\I}{\mathds 1}
\newcommand{\Bb}{\mathscr{B}}
\newcommand{\scalp}[2]{\langle#1,#2\rangle}
\newcommand{\mmu}{\mathsf{\pi}}
\begin{document}

\title{On the exponential ergodicity for
L\'{e}vy driven Ornstein-Uhlenbeck processes} % insert title - use \\ if it requires more than one line.

\authorone[Fujian Normal
University]{Jian Wang} % Affiliation is just the name of your university or institution

\addressone{School of Mathematics and Computer Science, Fujian Normal
University, 350007, Fuzhou, P.R. China.} % Your postal address goes here.

\emailone{jianwang@fjnu.edu.cn}

\begin{abstract}
Based on the explicit coupling property, the ergodicity and the exponential
ergodicity of L\'{e}vy driven Ornstein-Uhlenbeck processes are established.% text of abstract goes here!
\end{abstract}

\keywords{L\'{e}vy processes; Ornstein-Uhlenbeck
processes; coupling property; exponential
ergodicity} % insert keywords separated by a semicolon

\ams{60H10}{60J75; 60G51} % insert the primary Maths Subject Classification number in the first bracket
         % and the secondary ams number(s) in the second bracket
         % e.g. \ams{60E20}{49G03;49F10}

\section{Introduction and main results}\label{section1} % Initial capital letter, then lower case. No full stop.
Let $(X^x_t)_{t\ge0}$ be a $d$-dimensional Ornstein-Uhlenbeck
process, which is defined as the unique strong solution of the
following stochastic differential equation
\begin{equation}\label{ou1}
    dX_t = AX_t\,dt + dZ_t,\qquad X_0=x\in \R^d.
\end{equation}
Here $A$ is a real $d\times d$ matrix, and $(Z_t)_{t\ge0}$ is a
L\'{e}vy process in $\R^d$. It is well known that
$(X^x_t)_{t\ge0}$ is a strong Markov process with the following
form
\begin{equation}\label{ou1ou}
    X_t^x
    =e^{tA}x + \int_0^t e^{(t-s)A}\,dZ_s.
\end{equation}
The associated Markov semigroup acting on ${B}_b(\R^d)$, the class
of all bounded measurable functions on $\R^d$, is given by
\begin{equation} \label{eq4} P_tf(x):=\Ee f(X_t^x)=\int_{\R^d} f(e^{tA}x+z)\,\mmu_t(dz), \quad t\ge0, x\in \R^d, f\in {B}_b(\R^d),\end{equation}
where $\mmu_t$ is the law of $\int_0^t e^{(t-s)A} \,d Z_s$.
Semigroups of the type \eqref{eq4} are generalized Mehler
semigroups.

Let us recall that a L\'{e}vy process $Z=(Z_t)_{t\ge0}$ with
values in $\R^d$ is a $\R^d$-valued process defined on some
stochastic basis $(\Omega,\mathscr{F}, (\mathscr{F}_t)_{t\ge0}, \Pp)$,
continuous in probability, having stationary independent
increments, c\`{a}dl\`{a}g trajectories, and such that $Z_0=0$,
$\Pp$-a.s. It is well known that the characteristic exponent or
the symbol $\Phi$ of $(Z_t)_{t\ge0}$, defined by
$$
    \Ee\bigl(e^{i\scalp{\xi}{Z_t}}\bigr)
    =e^{-t\Phi(\xi)},\quad \xi\in\R^d,
$$
enjoys the following L\'{e}vy-Khintchine representation:
\begin{equation}\label{ou2}
     \Phi(\xi)
     =\frac{1}{2}\scalp{Q\xi}{\xi} +i\scalp{b}{\xi} +\int_{z\neq 0} \Bigl(1-e^{i\scalp{\xi}{z}}+i\scalp{\xi}{z}\I_{B(0,1)}(z)\Bigr)\nu(dz),
\end{equation}
where $Q\in \R^{d\times d}$ is a positive semi-definite matrix,
$b\in\Rd$ is the drift vector and $\nu$ is the L\'evy measure,
i.e.\ a $\sigma$-finite measure on $\R^d\setminus\{0\}$ such that
$\int_{z\neq 0}(1\wedge |z|^2)\,\nu(dz)<\infty$. Our main reference for L\'{e}vy processes is the monograph \cite{Sa}.

\medskip

The starting point of our paper is the following result about the existence of invariant measure for Ornstein-Uhlenbeck
processes, which was proven in \cite[Theorem 4.1]{SA1}.

\medskip

 \paragraph{{\bf Theorem 1.0}} \emph{Let $X=(X^x_t)_{t\ge0}$ be a $d$-dimensional Ornstein-Uhlenbeck
process determined by \eqref{ou1}, where the real parts of all eigenvalues of $A$ are
negative. If the L\'{e}vy measure $\nu$ of
L\'{e}vy process $Z$ satisfies $\int_{\{|z|\ge1\}}\log
(1+|z|)\,\nu(dz)<\infty$, then there exists an invariant measure $\mu$ such that for any $A\in \mathscr{B}(\R^d)$,
$$P_t(x,A)\to \mu(A),\quad t\to \infty,$$
where $P_t(x,dz)$ is the transition kernel of the process $(X_t)_{t\ge0}$. }

\medskip

We are very much interested in the ergodicity and the exponential
ergodicity of Ornstein-Uhlenbeck processes. The standard method
yielding the ergodicity is to verify that the process is strong Feller and
irreducible, cf.\ see \cite{Ci,MT, T}. The strong Feller property
of Ornstein-Uhlenbeck processes has been studied in \cite{BK,
Mu,PZ,S}. In particular, according to \cite[Theoerm 1.1 and Proposition
2.1]{PZ}, if the L\'{e}vy measure $\nu$ of L\'{e}vy process $Z$ is
infinite and has a density with respect to the Lebesgue measure,
then the associated Ornstein-Uhlenbeck process $(X_t)_{t\ge0}$
determined by \eqref{ou1} is strong Feller. We refer the readers to
see \cite[Section 3]{PZ1} for some discussions about the
irreducibility of Ornstein-Uhlenbeck processes. The novelty of this
paper is the direct use of coupling property in the proof of the
ergodicity (and the exponential ergodicity) for Ornstein-Uhlenbeck
processes. The coupling property of Ornstein-Uhlenbeck processes has
been studied in \cite{SW2,W1}. As we will see in the last section,
an obvious advantage of the coupling method lies in the succinctness
of the proof, which yields both the ergodicity and the exponential
ergodicity simply via the L\'{e}vy measure $\nu$.

\medskip

Before stating our main results, we first introduce some necessary
notations. Let $\nu$ be the L\'{e}vy measure of the L\'{e}vy
process $(Z_t)_{t\ge0}$, see \eqref{ou2}. For every
$\varepsilon>0$, define ${\nu}_\varepsilon$ on $\R^d$ as follows: for any $B\in \mathscr{B}(\R^d),$
\begin{equation}\label{levycut}
    {\nu}_\varepsilon(B)
    =
    \begin{cases}
        \nu(B),                           & \text{if\ \ } \nu(\R^d)<\infty;\\
        \nu(B\setminus \{z: |z|<\varepsilon\}), & \text{if\ \ } \nu(\R^d)=\infty.
    \end{cases}
\end{equation}Recall that for any two bounded measures $\mu_1$ and
$\mu_2$ on $(\R^d,\Bb(\R^d))$,
$$\mu_1\wedge\mu_2:=\mu_1-(\mu_1-\mu_2)^+,$$ where
$(\mu_1-\mu_2)^{\pm}$ refers to the Jordan-Hahn decomposition of the
signed measure $\mu_1-\mu_2$. In particular,
$\mu_1\wedge\mu_2=\mu_2\wedge\mu_1$, and $$\mu_1\wedge
\mu_2\,(\R^d)=\frac{1}{2}\bigg[\mu_1(\R^d)+\mu_2(\R^d)-\|\mu_1-\mu_2\|_{\var}\bigg],$$
where $\|\cdot\|_{\var}$ stands for the total variation norm. Denote
by $P_t(x,\cdot)$ the transition kernel of the process $X$.

\begin{theorem} \label{erg1}
Let $X=(X^x_t)_{t\ge0}$ be a $d$-dimensional Ornstein-Uhlenbeck
process determined by \eqref{ou1}, where the real parts of all eigenvalues of $A$ are
negative. Then we have the following two statements:

\begin{itemize}
\item [(1)] If the L\'{e}vy measure $\nu$ of
L\'{e}vy process $Z$ satisfies $\int_{\{|z|\ge1\}}\log
(1+|z|)\,\nu(dz)<\infty$ and \begin{equation}\label{erg11}\inf_{
|x|\le \rho}\nu _\varepsilon\wedge
(\delta_{x}*\nu_\varepsilon)(\R^d)>0\end{equation} for some
constants $\varepsilon, \rho>0$, then the process $X$ is
ergodic, i.e.\ there is a unique invariant measure $\mu$ such that
for any $x\in\R^d$,
$$\lim_{t\to \infty}\|P_t(x, \cdot)-\mu\|_{\var}=0.$$

\item[(2)] If the L\'{e}vy measure $\nu$ of
L\'{e}vy process $Z$ satisfies $\int_{\{|z|\ge1\}}|z|\,\nu(dz)<\infty$ and
\begin{equation}\label{erg12}\limsup_{\rho\to0}
\left[\frac{\sup\limits_{ |x|\le \rho}\|\nu _\varepsilon-
(\delta_{x}*\nu_\varepsilon)\|_{_{\var}}}{\rho}\right]<\infty\end{equation}
for some constant $\varepsilon>0$, then the process $X$ is
exponentially ergodic. More explicitly, there are a unique invariant
measure $\mu$ and two constants $\kappa, C>0$ such that for any
$x\in\R^d$ and $t>0$,
$$\|P_t(x, \cdot)-\mu\|_{\var}\le C (1+|x|)\exp\left(-\kappa\,t\right).$$
\end{itemize}
\end{theorem}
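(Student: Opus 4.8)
The strategy is to build an explicit coupling of two copies of the Ornstein-Uhlenbeck process starting from $x$ and $y$, and to use the contraction $e^{tA}$ together with the assumed refined coupling conditions on $\nu$ to control the coupling time in total variation. First I would reduce the problem to coupling the driving L\'{e}vy process: since $X_t^x-X_t^y = e^{tA}(x-y) + \int_0^t e^{(t-s)A}\,d(Z_s-Z_s')$, it suffices to couple $Z$ and an independent copy $Z'$ (after a suitable drift/jump modification) so that the two solutions meet. The condition \eqref{erg11} (resp.\ \eqref{erg12}) is exactly the quantitative input guaranteeing that the ``small jumps'' part of $\nu$ admits a nontrivial common component with its shift $\delta_x*\nu_\varepsilon$, which is what powers the construction in \cite{SW2,W1}; I would invoke that coupling property to obtain, for the coupling time $T$, an estimate of the form $\Pp(T>t)\le$ (something decaying in $t$), where the rate depends on how fast $e^{tA}(x-y)\to 0$ and on the integrability hypothesis on the large jumps of $\nu$.

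For part (1): since all eigenvalues of $A$ have negative real part, $\|e^{tA}\|\le C e^{-\lambda t}$ for some $\lambda>0$, so $e^{tA}(x-y)\to 0$; combined with the log-moment assumption $\int_{|z|\ge1}\log(1+|z|)\,\nu(dz)<\infty$ (which is precisely Theorem~1.0's hypothesis ensuring $\mmu_t$ converges to an invariant law $\mu$), and with \eqref{erg11} ensuring that once $|X_t^x-X_t^y|$ is small the coupling succeeds with probability bounded below, a Borel–Cantelli / renewal argument gives $\Pp(T<\infty)=1$ and hence $\|P_t(x,\cdot)-P_t(y,\cdot)\|_{\var}\le 2\Pp(T>t)\to 0$. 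Taking $y$ distributed according to the invariant measure $\mu$ from Theorem~1.0 (and using its stationarity) yields $\|P_t(x,\cdot)-\mu\|_{\var}\to 0$, and the same coupling bound forces uniqueness of $\mu$.

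For part (2): the stronger hypotheses upgrade the qualitative convergence to a geometric rate. The Lipschitz-type bound \eqref{erg12}, $\|\nu_\varepsilon-\delta_x*\nu_\varepsilon\|_{\var}\le L|x|$ for small $|x|$, means that when the two marginals are at distance $r$ the per-unit-time probability of failing to couple the small-jump parts is at most of order $r$; since $|X_t^x-X_t^y|$ decays exponentially like $e^{-\lambda t}|x-y|$, the accumulated failure probability is summable and one gets $\Pp(T>t)\le C(1+|x-y|)e^{-\kappa t}$ for some $\kappa>0$ (the first-moment assumption $\int_{|z|\ge1}|z|\,\nu(dz)<\infty$ is what makes the large-jump contribution to the distance integrable with an exponential tail rather than merely finite). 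Integrating over $y\sim\mu$, using $\int (1+|y|)\,\mu(dy)<\infty$ (finite since $\mu$ inherits a first moment from the first-moment condition on $\nu$), gives $\|P_t(x,\cdot)-\mu\|_{\var}\le C(1+|x|)e^{-\kappa t}$.

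The main obstacle is the second step: constructing the coupling so that the contraction of $e^{tA}$ and the coupling of the jump parts cooperate, and turning the static conditions \eqref{erg11}, \eqref{erg12} into a genuine dynamic estimate on $\Pp(T>t)$ with the right dependence on $|x|$. One has to be careful that the modification needed to couple $Z$ with its shift (a Mecke/interlacing-type construction adding and deleting jumps near $0$) does not destroy the L\'{e}vy structure of either marginal, and that the resulting extra drift is controlled; handling the non-symmetric, possibly non-normal matrix $A$ (so $\|e^{tA}\|$ may have a polynomial prefactor before the exponential takes over) requires absorbing that prefactor into the constant $C$. Once the coupling inequality $\|P_t(x,\cdot)-P_t(y,\cdot)\|_{\var}\le 2\Pp(T>t)$ with the stated bound on $\Pp(T>t)$ is in hand, the rest is the routine triangle-inequality argument against the invariant measure.
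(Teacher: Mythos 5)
Your overall architecture---a quantitative coupling bound on $\|P_t(x,\cdot)-P_t(y,\cdot)\|_{\var}$, followed by integration against the invariant measure $\mu$ supplied by Theorem~1.0---is exactly the paper's. Part (1) of your sketch is essentially sound: only qualitative convergence of $\|P_t(x,\cdot)-P_t(y,\cdot)\|_{\var}$ to $0$ is needed (the paper imports the explicit rate $C(1+|x-y|)/\sqrt{t}$ from Schilling--Wang, but that precision is not required), and since the total variation distance is bounded by $2$, dominated convergence handles the integration against $\mu$ with no moment hypothesis; the same bound gives uniqueness.

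Two steps in part (2) are genuine gaps rather than routine details. First, the passage from the static Lipschitz condition \eqref{erg12} to an exponential rate: your argument that the ``accumulated failure probability is summable'' does not yield $\Pp(T>t)\le C(1+|x-y|)e^{-\kappa t}$ --- summability only gives a.s.\ finiteness of the coupling time, i.e.\ part (1) again, not a geometric tail. The missing mechanism (which the paper supplies in Theorem \ref{coupgeneral}) is a two-stage bootstrap: one first proves a bound \emph{uniform in $t$},
$$|P_tg(x)-P_tg(y)|\le 2\|g\|_\infty e^{-\nu_\varepsilon(\R^d)\,t}+K\,\|g\|_\infty\,|x-y|,$$
by shifting the last jump of the $\nu_\varepsilon$-compound-Poisson component (the first term is the probability that no such jump has occurred, the second is the cost of the shift, controlled by \eqref{erg12}); one then applies this at time $s$ to the pair $(X_{t-s}^x,X_{t-s}^y)$ via the Markov property, uses the deterministic contraction $|X_{t-s}^x-X_{t-s}^y|\le ce^{-\lambda(t-s)}|x-y|$, and optimizes over the split of $[0,t]$. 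Some such interleaving of the Poisson clock with the contraction of $e^{tA}$ is indispensable; you flag it as ``the main obstacle'' but do not resolve it. Second, you assert that $\int|y|\,\mu(dy)<\infty$ because $\mu$ ``inherits a first moment'' from $\int_{\{|z|\ge1\}}|z|\,\nu(dz)<\infty$. This is true but not free: the paper proves it by writing $Y_t=\int_0^te^{(t-s)A}\,dZ_s$ through the L\'evy--It\^o decomposition, bounding the drift term and the compensated small-jump integral (Cauchy--Schwarz plus the It\^o isometry), showing the large-jump part is a compound Poisson random variable whose L\'evy measure has first moment controlled by $\int_{\{|z|\ge1\}}|z|\,\nu(dz)$, concluding $\sup_{t>0}\Ee|Y_t|<\infty$, and transferring this to $\mu$ by a truncation argument. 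You need either to reproduce such an argument or to cite a precise moment result for the limiting (operator-self-decomposable) law.
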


\begin{remark} (1) Under \eqref{erg12} and for fixed $\varepsilon>0$, there exists $\rho>0$ such that
$$\sup\limits_{ |x|\le \rho}\|\nu _\varepsilon-
(\delta_{x}*\nu_\varepsilon)\|_{_{\var}}\le \nu_\varepsilon(\R^d),$$ and so
$$\aligned\inf_{
|x|\le \rho}\nu _\varepsilon\wedge
(\delta_{x}*\nu_\varepsilon)(\R^d)&=\frac{1}{2}\inf_{
|x|\le \rho}\bigg[\nu_\varepsilon(\R^d)+(\delta_{x}*\nu_\varepsilon)(\R^d)-\|\nu _\varepsilon-
(\delta_{x}*\nu_\varepsilon)\|_{_{\var}}\bigg]\\
&=\frac{1}{2}\bigg[2\nu_\varepsilon(\R^d)-\sup_{
|x|\le \rho}\|\nu _\varepsilon-
(\delta_{x}*\nu_\varepsilon)\|_{_{\var}}\bigg]\\
&\ge \frac{1}{2}\nu_\varepsilon(\R^d)>0.\endaligned$$ This shows
that \eqref{erg12} implies \eqref{erg11}.

(2) We mention that in many applications the condition \eqref{erg11}
is weak. For example, it is proven in \cite[Proposition 1.5]{SW2}
that \eqref{erg11} is satisfied, when L\'{e}vy measure $\nu$ of
$(Z_t)_{t\ge0}$ satisfies $\nu(dz)\ge \rho(z)\,dz$ such that
$\int_{\{|z-z_0|\le \varepsilon\}}    \frac{dz}{\rho(z)}<\infty$
holds for some $z_0\in\R^d$ and some $\varepsilon>0$.

(3) According to Theorem \ref{thehreheh} below, the second assertion in Theorem \ref{erg1} still holds,
if \eqref{erg12} is replaced by
$$\limsup_{r\to0}\frac{\sup_{|x|\le r} \int_{\{|z-z_0|\le \varepsilon\}}|\rho(z)-\rho(x+z)|\,dz}{r}<\infty$$
for some $z_0\in\R^d$ and some $\varepsilon>0$, where $\rho(z)$ is a
Borel measurable function on $\R^d\setminus \{0\}$ such that
$\nu(dz)\ge \rho(z)\,dz$. \end{remark}

\medskip

The following result presents the exponential ergodicity for
Ornstein-Uhlenbeck processes, under weaker integral conditions for
the L\'{e}vy measure $\nu$ on the range $\{z\in\R^d: |z|\ge1\}$.

\begin{theorem}\label{erg2333}Let $X=(X^x_t)_{t\ge0}$ be a $d$-dimensional Ornstein-Uhlenbeck
process determined by \eqref{ou1}, where the real parts of all eigenvalues of $A$ are
negative. If the L\'{e}vy measure $\nu$ satisfies \begin{equation}\label{coup1erg2333}
       \liminf\limits_{|\xi|\rightarrow\infty}\frac{\int_{\{|z|\le 1/|\xi|\}}\langle z,\xi\rangle^2\,\nu(dz)}{\log (1+|\xi|)} >0.
\end{equation} and $\int_{\{|z|\ge1\}}|z|^\alpha\,\nu(dz)<\infty$ for some constant $0<\alpha\le1$,
then there are a unique invariant measure $\mu$ and two constants
$\kappa,C>0$ such that for any $x\in\R^d$ and $t>0$,
$$\|P_t(x, \cdot)-\mu\|_{\var}\le C (1+|x|^\alpha)\exp\left(-\kappa\,t\right).$$  \end{theorem}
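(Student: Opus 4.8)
The plan is to reduce the statement to an application of a coupling estimate — presumably the one developed in the earlier sections (the ``explicit coupling property'' advertised in the abstract) — combined with a moment bound of order $\alpha$ for the process. The overall strategy mirrors Theorem \ref{erg1}(2), but with the linear moment $|x|$ replaced by $|x|^\alpha$ and with condition \eqref{erg11}/\eqref{erg12} replaced by the lower bound \eqref{coup1erg2333} on small jumps. First I would record the decomposition $Z_t = Z_t^{(1)} + Z_t^{(2)}$, where $Z^{(1)}$ collects the small jumps (together with the Gaussian part, if any) and $Z^{(2)}$ the large jumps; correspondingly $X_t^x$ splits, and the OU structure \eqref{ou1ou} is linear in the driving noise so the two pieces evolve independently under the same matrix $A$. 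The component driven by $Z^{(1)}$ is the one to which the coupling argument applies.

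The key analytic step is to show that \eqref{coup1erg2333} forces the marginal law $\mmu_t^{(1)}$ of the small-jump OU component to have a nontrivial common mass under a spatial shift by $e^{tA}x$; concretely, I expect a bound of the form $\|\mmu_t^{(1)} - \delta_{e^{tA}x - e^{tA}y} * \mmu_t^{(1)}\|_{\var} \le$ something decaying in $t$ uniformly for $|x-y|$ bounded, obtained by estimating the characteristic function: $|\widehat{\mmu_t^{(1)}}(\xi)| = \exp(-\int_0^s \operatorname{Re}\Phi^{(1)}(e^{(t-s)A^{*}}\xi)\,ds)$ and using that $\operatorname{Re}\Phi^{(1)}(\eta) \gtrsim \int_{\{|z|\le 1/|\eta|\}}\langle z,\eta\rangle^2\,\nu(dz) \gtrsim \log(1+|\eta|)$ for large $|\eta|$, which yields enough integrability of $\widehat{\mmu_t^{(1)}}$ to control the total variation distance via a Fourier/Plancherel-type inequality. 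Since the eigenvalues of $A$ have negative real part, $\|e^{tA}\|$ decays exponentially, so $|e^{tA}x - e^{tA}y| \le Ce^{-\lambda t}|x-y|$, and one gets an exponential-in-$t$ successful coupling probability for the $Z^{(1)}$-parts. For the $Z^{(2)}$-part (compound Poisson, finite intensity), a standard argument gives that with probability at least $1 - Ce^{-ct}$ no large jump occurs on a time block, so the full processes can be coupled with failure probability decaying exponentially — but the failure event carries an $|x|^\alpha$-type cost, which is where the moment hypothesis enters.

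The moment input is: under $\int_{\{|z|\ge1\}}|z|^\alpha\,\nu(dz) < \infty$ one has $\Ee|X_t^x|^\alpha \le C(1+|x|^\alpha)$ uniformly in $t\ge 0$ (for $\alpha\le 1$, subadditivity of $r\mapsto r^\alpha$ makes this elementary from \eqref{ou1ou}: split the stochastic integral into small- and large-jump parts, use $\|e^{tA}\|\le Ce^{-\lambda t}$, and bound the large-jump contribution by $\int_0^\infty e^{-\lambda\alpha s}\,ds\,\int_{\{|z|\ge1\}}|z|^\alpha\nu(dz)$ plus a term from the compensated small jumps). Combining the coupling on a reference time-block with this uniform moment bound in the usual iteration scheme (couple on $[0,T]$; on the failure event, restart from the current positions and use the moment bound to control $\Ee(1+|X_T|^\alpha)$; iterate over $n$ blocks) produces $\|P_t(x,\cdot)-\mu\|_{\var}\le C(1+|x|^\alpha)e^{-\kappa t}$ after choosing $T$ large enough that the per-block contraction beats the moment growth. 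Uniqueness of $\mu$ and its existence follow as in Theorem 1.0 together with the established convergence.

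The main obstacle I anticipate is the second step: converting the integral lower bound \eqref{coup1erg2333} — which is a statement about $\nu$ restricted to the shrinking balls $\{|z|\le 1/|\xi|\}$ — into a genuine lower bound on $\operatorname{Re}\Phi^{(1)}(\xi)$ of order $\log(1+|\xi|)$, and then propagating it through the time-integral $\int_0^t \operatorname{Re}\Phi^{(1)}(e^{(t-s)A^{*}}\xi)\,ds$ despite the distortion introduced by $e^{(t-s)A^{*}}$ (which can rotate $\xi$ and change its length, though only shrink it for large $t-s$). One has to be careful that the logarithmic growth is not destroyed by this linear flow; using that $\operatorname{Re}\Phi^{(1)}$ is comparable to $\log(1+|\cdot|)$ on a whole neighborhood of infinity, together with two-sided bounds on the singular values of $e^{(t-s)A^{*}}$, should suffice, but this is the technical heart of the argument and likely where the earlier sections' coupling lemma is invoked to package the estimate cleanly.
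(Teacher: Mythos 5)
Your sketch isolates the same two ingredients that the paper actually uses --- converting \eqref{coup1erg2333} into a logarithmic lower bound on $\Re\Phi$, and combining a Fourier-based total variation estimate with a uniform $\alpha$-moment bound --- so at the level of ideas you are on the paper's route. The paper's own proof of Theorem \ref{erg2333} is only the first of these: by $1-\cos r\ge\frac{\cos 1}{2}r^2$ for $|r|\le1$ one gets $\Re\Phi(\xi)\ge\frac{\cos 1}{2}\int_{\{|z|\le 1/|\xi|\}}\langle\xi,z\rangle^2\,\nu(dz)$, so \eqref{coup1erg2333} implies \eqref{coup1erg2}, and everything else is delegated to Lemma \ref{erg2} and Theorem \ref{coup111}. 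The ``distortion by $e^{sA^\top}$'' that you identify as the technical heart is handled there by Fatou's lemma together with $\lim_{|\xi|\to\infty}\inf_{0<s<t_0}\log(1+|e^{sA^\top}\xi|)/\log(1+|\xi|)=1$; no block iteration appears anywhere --- the exponential rate falls out of the one-shot bound $\|P_t(x,\cdot)-P_t(y,\cdot)\|_{\var}\le C|e^{tA}(x-y)|\,\varphi_t^{-1}(1)$ and $\|e^{tA}\|\le ce^{-\lambda t}$.

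Two concrete gaps remain in your version. First, the coupling estimate you would extract from the characteristic function is Lipschitz in $|x-y|$, whereas the hypothesis only supplies moments of order $\alpha\le1$; you never say how the linear dependence is downgraded to an $\alpha$-power. The missing step is the truncation/interpolation used in Lemma \ref{erg2}: since also $|P_tg(x)-P_tg(y)|\le 2\|g\|_\infty$, one has $|P_tg(x)-P_tg(y)|\le (C_t\|g\|_\infty|x-y|)^{\alpha}(2\|g\|_\infty)^{1-\alpha}$, and only then can the $\alpha$-moment of $X_s^x-x$ (or of $\mu$) be brought in. Without this the scheme does not close, because $\Ee|X_s^x-x|$ need not be finite when $\alpha<1$. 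Second, your treatment of the large-jump part is backwards: for a compound Poisson component of intensity $\beta=\nu(\{|z|>1\})$ the probability of no jump on a block of length $T$ is $e^{-\beta T}$, which tends to $0$, not to $1$, so ``with probability at least $1-Ce^{-ct}$ no large jump occurs'' cannot be the mechanism. Fortunately no avoidance is needed: the large-jump contribution is independent of the initial point, so under the synchronous coupling $X_t^x-X_t^y=e^{tA}(x-y)$ regardless of the large jumps; equivalently, as in the proof of Theorem \ref{coup111}, one factors $P_t=Q_tR_t$ and uses that applying the same Markov kernel to both marginals can only decrease the total variation distance. With these two repairs your argument becomes a (somewhat heavier) variant of the paper's proof.
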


It is clear that Theorem \ref{erg2333} can apply to study the
exponential ergodicity for Ornstein-Uhlenbeck processes driven by
$\alpha$-stable processes with $\alpha\in (0,2)$.

\bigskip

The remaining part of this paper is organized as follows. Section
\ref{section3} is devoted to the coupling property of
Ornstein-Uhlenbeck processes, which is key to our main results. In
Section \ref{section5}, we will present the proofs of Theorems
\ref{erg1} and \ref{erg2333}. Here, a general conclusion for the
exponential ergodicity of Ornstein-Uhlenbeck processes is given (see
Theorem \ref{thehreheh} below), which improves the second assertion
in Theorem \ref{erg1}.
% Write the text of your paper using normal LaTeX commands.
% For instance, you can use the `\cite' command~\cite{ref1}.
% When giving citations a numbering system is preferred~\cite{ref2},
% but an author--date system is also acceptable~\cite{ref3}.

\section{Coupling property}\label{section3}

In this section, we are mainly concerned with the coupling property
for the Ornstein-Uhlenbeck process $X=(X^x_t)_{t\ge0}$ given by
\eqref{ou1ou}. Recall that the process $X$ has successful
couplings (or has the coupling property) if and only if for any
$x,y\in\R^d$,
\begin{equation*}\label{prex1}
    \lim_{t\rightarrow\infty}\|P_t(x,\cdot)-P_t(y,\cdot)\|_{\var}=0,
\end{equation*}
where $P_t(x,dz)$ is the transition kernel of the process $X$ and
$\|\cdot\|_{\var}$ stands for the total variation norm. The coupling
property has been intensively studied for L\'{e}vy processes on
$\R^d$ and Ornstein-Uhlenbeck processes driven by L\'{e}vy processes
on $\R^d$, see \cite{BSW, SSW, SW1, SW2,W1}. Recently, by using the
lower bound conditions for the L\'{e}vy measure with respect to a
nice reference probability measure, we have successfully obtained
the coupling property for linear stochastic differential equations
driven by non-cylindrical L\'{e}vy processes on Banach spaces, see
\cite[Theorem 1.2]{WW}.

\medskip

Let $\nu$ be the L\'{e}vy measure corresponding to the L\'{e}vy
process $(Z_t)_{t\ge0}$, see \eqref{ou2}. For every $\varepsilon>0$,
define a finite measure ${\nu}_\varepsilon$ on $\R^d$ as that in
\eqref{levycut}. For a $d\times d$ matrix $A$, we say that an
eigenvalue $\lambda$ of $A$ is \emph{semisimple} if the dimension of
the corresponding eigenspace is equal to the algebraic multiplicity
of $\lambda$ as a root of characteristic polynomial of $A$. Note
that for symmetric matrices all eigenvalues of them are real and
semisimple.

\bigskip

The following result generalizes \cite[Theorem 1.1]{SW2}, and it
presents the exponential rate for the coupling property of
Ornstein-Uhlenbeck processes.

\begin{theorem}\label{coupgeneral} Let $X=(X^x_t)_{t\ge0}$ be the Ornstein-Uhlenbeck process given by
\eqref{ou1ou}, where the real parts of all eigenvalues of $A$ are
non-positive and all purely imaginary eigenvalues of $A$ are
semisimple. If there exist two constants $\varepsilon, \rho>0$
such that
\begin{equation}\label{eq21}\inf_{ |x|\le \rho}\nu
_\varepsilon\wedge
(\delta_{x}*\nu_\varepsilon)(\R^d)>0,\end{equation} then there
exists a constant $C_1>0 $ such that for all $x,y\in\R^d$ and
$t>0$,
$$
    \|P_t(x,\cdot)-P_t(y,\cdot)\|_{\var}\le
  \frac{C_1(1+|x-y|)}{\sqrt{t}}.$$

  Furthermore, suppose that the real parts of all eigenvalues of $A$ are
negative. If \eqref{eq21} is strengthened by
  \begin{equation}\label{eq2122ss}\limsup_{\rho\to0} \left[\frac{\sup\limits_{ |x|\le \rho}\|\nu
_\varepsilon-
(\delta_{x}*\nu_\varepsilon)\|_{_{\var}}}{\rho}\right]<\infty,\end{equation}
then there exist two constants $\kappa, C_2>0 $ such that for all
$x,y\in\R^d$ and $t>0$,
$$    \|P_t(x,\cdot)-P_t(y,\cdot)\|_{\var}\le
{C_2(1+|x-y|)}\exp\left(-\kappa\,t\right).$$
  \end{theorem}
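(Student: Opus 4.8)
The plan is to build an explicit coupling of two copies of the Ornstein–Uhlenbeck process started from $x$ and $y$, and to control the coupling time using the lower bound \eqref{eq21} on $\nu_\varepsilon$. First I would use the representation \eqref{ou1ou} to write $X_t^x - X_t^y = e^{tA}(x-y)$ in the absence of any modification of the driving noise; the negativity (or non-positivity plus semisimplicity of imaginary eigenvalues) of the eigenvalues of $A$ gives a bound $|e^{tA}(x-y)| \le C(1+|x-y|)$, and in the strictly-negative case $|e^{tA}(x-y)| \le C|x-y|e^{-\lambda t}$ for some $\lambda>0$. This handles the ``deterministic drift'' part of the discrepancy. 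The real work is to couple the jump parts so that the two processes actually meet: I would decompose $Z_t = Z_t^{(\varepsilon)} + R_t^{(\varepsilon)}$, where $Z^{(\varepsilon)}$ is the compound-Poisson process carrying the jumps of size $\ge \varepsilon$ (or all jumps if $\nu$ is finite), with jump intensity $\nu_\varepsilon(\R^d)$ and jump law $\nu_\varepsilon/\nu_\varepsilon(\R^d)$, and $R^{(\varepsilon)}$ carries the small jumps and the Gaussian part. The small-jump/Gaussian parts are driven by the \emph{same} noise for both marginals (a synchronous coupling), so they contribute nothing to the difference beyond the $e^{tA}$ factor.

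For the large jumps I would use a coupling at the level of single jumps. At each jump time $\tau$ of the Poisson clock, if the current discrepancy between the two processes is $w = e^{(t-\cdot)A}(x-y)$-type vector of norm $\le \rho$, then I apply the classical maximal (mirror/reflection-type) coupling of the two jump measures $\nu_\varepsilon$ and $\delta_{w}*\nu_\varepsilon$: with probability $\nu_\varepsilon\wedge(\delta_w*\nu_\varepsilon)(\R^d)/\nu_\varepsilon(\R^d)$ the two jumps can be chosen so that, after adding them and propagating by $e^{sA}$, the two processes coincide from then on (recall the processes solve the \emph{same} linear SDE once they agree, so coupling is successful). Condition \eqref{eq21} guarantees this success probability is bounded below by a constant $p_0>0$ \emph{uniformly} over all discrepancies of norm $\le\rho$. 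Thus once the discrepancy has entered the ball of radius $\rho$, each subsequent Poisson jump (spaced by independent Exp$(\nu_\varepsilon(\R^d))$ times) is an independent Bernoulli($p_0$) chance to couple, so the coupling time after entry is stochastically dominated by a geometric sum of exponentials, which has exponential tails. The first passage of $|e^{tA}(x-y)|$ into $\{|w|\le\rho\}$: in the non-positive-eigenvalue case this norm stays bounded by $C(1+|x-y|)$ but need not decay, so I instead split the Poisson clock itself — with constant probability per unit time a large jump occurs that is itself small (using $\nu_\varepsilon$ has positive mass on some ball, which follows from \eqref{eq21}), bringing the discrepancy into the target ball; combining with the geometric-coupling argument after entry gives a coupling time $T$ with $\Pp(T>t) \le C(1+|x-y|)/\sqrt t$ — the polynomial rate coming from the fact that we only control the discrepancy up to a multiplicative $(1+|x-y|)$ and must wait for it to shrink; quantifying this waiting more carefully via a moment estimate $\Ee T \le C(1+|x-y|)$ combined with Markov's inequality gives the $1/\sqrt t$ in the statement (one uses $\Ee\sqrt{T}\le C\sqrt{1+|x-y|}$ more precisely). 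In the strictly-negative-eigenvalue case, $|e^{tA}(x-y)|\le C|x-y|e^{-\lambda t}$ enters $\{|w|\le\rho\}$ by a deterministic time $t_0 = t_0(x,y) \sim \lambda^{-1}\log(1+|x-y|)$, after which the geometric coupling gives exponential decay; combining, $\|P_t(x,\cdot)-P_t(y,\cdot)\|_{\var} \le \Pp(T>t) \le C(1+|x-y|)e^{-\kappa t}$, where the linear-in-$|x-y|$ prefactor absorbs the $e^{\lambda t_0}$.

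Here the strengthened hypothesis \eqref{eq2122ss} is what converts the polynomial estimate into the exponential one in the delicate regime, but in the form I have organized the argument its role is cleaner: I would not actually need \eqref{eq2122ss} for the exponential rate if the eigenvalues are strictly negative, because then the contraction $e^{-\lambda t}$ of the drift already forces the discrepancy into the coupling ball by a logarithmic time; so the exponential ergodicity should follow from \eqref{eq21} alone plus strict negativity. I would double-check this against the first assertion to make sure the $1/\sqrt t$ there is genuinely due to the merely-non-positive spectrum, and present the second assertion's proof as: (i) drift enters $\{|w|\le\rho\}$ by time $t_0 \le \lambda^{-1}\log(C(1+|x-y|))$; (ii) after $t_0$, run the maximal coupling at each large jump, coupling time after $t_0$ has exponential tail with rate $\kappa' = \kappa'(p_0,\nu_\varepsilon(\R^d))$; (iii) combine, using $\|P_t(x,\cdot)-P_t(y,\cdot)\|_{\var}\le 2\Pp(\text{not coupled by }t)$, to get the bound $C(1+|x-y|)e^{-\kappa t}$ with $\kappa < \min(\lambda,\kappa')$.

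The main obstacle I anticipate is the bookkeeping around the propagation of a single-jump discrepancy through the linear flow $e^{sA}$: after a large jump at time $\tau$ the two processes' difference is $e^{(t-\tau)A}(X_\tau^x - X_\tau^y)$, and to make the maximal coupling ``stick'' I need the added jumps to cancel the \emph{current} difference $X_\tau^x - X_\tau^y$ exactly, i.e. the coupled jump for the $y$-process is the $x$-jump plus $(X_{\tau^-}^x - X_{\tau^-}^y)$, whose law relative to $\nu_\varepsilon$ is exactly $\delta_{w}*\nu_\varepsilon$ with $w = X_{\tau^-}^x - X_{\tau^-}^y$; this is why \eqref{eq21} is phrased as a bound on $\nu_\varepsilon\wedge(\delta_x*\nu_\varepsilon)$. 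Making this rigorous requires constructing the coupled driving noise carefully (typically via a Poisson random measure and a thinning/marking construction), verifying the coupled pair is a genuine coupling (each marginal has the right law — this is where the small jumps and Gaussian part being \emph{synchronously} coupled, hence identically distributed, is used), and checking measurability of all the randomized choices; none of this is deep, but it is where the proof's length resides. I would also need the elementary estimate, already implicit in Theorem 1.0's proof, that $\int_{\{|z|\ge 1\}}\log(1+|z|)\,\nu(dz)<\infty$ (resp. $\int_{\{|z|\ge1\}}|z|\,\nu(dz)<\infty$) is \emph{not} needed here — the coupling estimate is purely about the jump structure near the diagonal and holds with no moment assumption on $\nu$ at infinity, the moment conditions entering only later when one wants an actual invariant measure.
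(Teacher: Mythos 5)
Your strategy---an explicit pathwise coupling in which, at each jump of the $\nu_\varepsilon$-compound Poisson clock, one applies the maximal coupling of $\bar\nu_\varepsilon$ and $\delta_{w}*\bar\nu_\varepsilon$ to cancel the current discrepancy $w$---is essentially the route of \cite{SW2} for the \emph{first} assertion, which the paper simply quotes. For the second assertion your argument has a genuine gap at step (ii). Once $|w|\le\rho$, the successive jump times are \emph{not} independent Bernoulli$(p_0)$ chances to couple. In the maximal coupling, if the ``success'' part of the joint law has mass $p_0>0$, then on the failure event the two jumps $(U,V)$ must be drawn from the two residual measures; one cannot set $V=U$ on failure, since the marginal law of $V$ would then be $\delta_w*\mu_1+(\bar\nu_\varepsilon-\mu_1)$, which equals $\bar\nu_\varepsilon$ only if the success part $\mu_1$ satisfies $\delta_w*\mu_1=\mu_1$, forcing $\mu_1=0$. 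Hence after every failed attempt the discrepancy becomes $w+U-V$, which generically leaves the ball $\{|w|\le\rho\}$ and can be as large as the jumps themselves; controlling the ensuing excursion, even with the contraction $e^{-\lambda t}$, requires moments of $\log(1+|U|)$, i.e.\ integrability of $\nu$ at infinity, which the theorem does not assume and which you explicitly (and, for the theorem as stated, correctly) declare unnecessary. Under \eqref{eq21} alone this per-jump mechanism yields only the random-walk/local-CLT rate $1/\sqrt t$ of the first assertion (think of a compound Poisson process on $\Z$, where $A=0$ is merely non-positive and $1/\sqrt t$ is sharp); you give no argument that strict negativity of the spectrum upgrades it to an exponential rate, so your conclusion that \eqref{eq2122ss} is superfluous is unsubstantiated, and with it the whole second half of your proof.

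The paper's proof of the exponential bound is analytic rather than pathwise and uses \eqref{eq2122ss} in an essential way. It writes $P_tg(x)=\Ee\big(g(X_t^x)\I_{\{N_t=0\}}\big)+P_t^1g(x)$, where $N_t$ counts the $\nu_\varepsilon$-jumps, conditions on everything except the \emph{last} large jump, and integrates that jump out against $\nu_\varepsilon$; changing $x$ to $y$ then amounts to shifting the argument of $\nu_\varepsilon$ by $e^{sA}(x-y)$ for some $s\in[0,t]$, so that $|P_t^1g(x)-P_t^1g(y)|\le \frac{c\Gamma}{C_\varepsilon}\|g\|_\infty|x-y|$ uniformly in $t$, where $\Gamma:=\sup_{\rho>0}\rho^{-1}\sup_{|w|\le\rho}\|\nu_\varepsilon-\delta_w*\nu_\varepsilon\|_{\var}$ is finite precisely because of \eqref{eq2122ss} together with the trivial bound $2C_\varepsilon$. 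Adding the $e^{-C_\varepsilon t}$ no-jump term, inserting the contraction $|X^x_{t-s}-X^y_{t-s}|\le ce^{-\lambda(t-s)}|x-y|$ through $P_t=P_{t-s}P_s$, and optimizing over $s$ gives the exponential rate with no coupling-time or moment analysis at all. To salvage a probabilistic proof you would need to quantify how both the failure probability and the law of the residual overshoot $U-V$ depend on $|w|$, which is exactly the Lipschitz-in-total-variation information that \eqref{eq2122ss} encodes and that \eqref{eq21} does not.
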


\begin{remark}\label{remark3344} A condition \eqref{eq21} has been used to study the coupling
property for L\'{e}vy processes on $\R^d$ (see \cite[Theorem
1.1]{SW1}), and  Ornstein-Uhlenbeck
driven by L\'{e}vy processes on $\R^d$ (see \cite[Theorem
1.1]{SW2}).
 \end{remark}

\begin{proof}[Proof of Theorem \ref{coupgeneral}] The first required assertion has
been proven in \cite[Theorem 1.1]{SW2}, and so it suffices to prove
the second one. For simplicity, denote by $T_t=e^{tA}$ for $t\ge0.$
Since the real parts of all eigenvalues of $A$ are negative,
$$\|T_t\|_{\R^d\to\R^d}:=\sup_{x\in\R^d, |x|=1}|T_tx|\le ce^{-\lambda
t}$$ for all $t>0$ and some constants $c, \lambda>0$, e.g.\ see
\cite[(2.8)]{SA1}. For any $\varepsilon>0$, let
$(Z_t^\varepsilon)_{t\ge0}$ be a compound Poisson process on $\R^d$
with L\'{e}vy measure $\nu_\varepsilon$, which is well defined since
$\nu_\varepsilon$ is a finite measure on $\R^d$. Then,
$(Z_t^\varepsilon)_{t\ge0}$ and $(Z_t-Z_t^\varepsilon)_{t\ge0}$ are two
independent L\'{e}vy processes on $\R^d$. It follows, in particular,
that the random variables
$$
    X_t^{\varepsilon,x}:=T_tx+\int_0^t T_{t-s}\,dZ_s^\varepsilon
$$
and
$$
   \overline{ X_t^{\varepsilon} }:=X_t^x-X_t^{\varepsilon,x}=\int_0^t  T_{t-s}\,d(Z_s-Z_s^\varepsilon)
$$
are well defined on $\R^d$ and are independent for any
$\varepsilon>0$ and $t\ge0$.

Denote by
$$
    X_t^{\varepsilon,0} := X_t^{\varepsilon,x}-T_tx = \int_0^tT_{t-s}\,dZ_s^\varepsilon.
$$
We will rewrite  $X_t^{\varepsilon,0}$ as follows. Construct a
sequence $(\tau_i)_{i\ge1}$ of i.i.d.\ random variables which are
exponentially distributed with intensity
$C_\varepsilon=\nu_\varepsilon(\R^d)$, and introduce a further
sequence $(U_i)_{i\ge1}$ of i.i.d.\ random variables on $\R^d$ with
law $\bar{\nu}_\varepsilon=\nu_\varepsilon/C_\varepsilon$. We will
assume that the random variables $(U_i)_{i\ge1}$ are independent of
the sequence $(\tau_i)_{i\ge1}$. Then, according to \cite[Theorem
4.3]{Sa},
$$Z_t^\varepsilon=\sum_{i=1}^{N_t}U_i$$ for every $t\ge0$, where
$(N_t)_{t\ge0}$ is a Poisson process of intensity $C_\varepsilon$,
i.e.\ $$N_t=\sup\Big\{k\ge1: \sum_{i=1}^k\tau_i\le t\Big\},$$ and
here we set $\sum_{i\in \varnothing}=0$ by convention. Therefore, it
is not difficult to check that
\begin{equation*}\label{proofs0}
  X_t^{\varepsilon,0}= 0\cdot\I_{\{\tau_1>t\}}
  +\sum_{k=1}^\infty\I_{\{\tau_1+\cdots+\tau_k\le t<\tau_1+\cdots+\tau_{k+1}\}}
   \Big(T_{t-\tau_1} U_1+\cdots+T_{t-(\tau_1+\cdots+\tau_k)}
   U_k\Big).
\end{equation*}
Since $$\tau_1+\cdots+\tau_{N_t}\le
t<\tau_1+\cdots+\tau_{N_t+1},\quad t\ge0,$$ it holds that on the set $\{N_t\ge1\}$,
$$X_t^{\varepsilon,0}=\sum_{k=1}^{N_t}T_{t-\sum_{i=1}^k\tau_i}U_k.$$

Next, we will make use of the decomposition:
\begin{equation}\label{coupgeneral11}P_tg(x)=\Ee(g(X_t^x)\I_{\{N_t=0\}})+P_t^1g(x),\quad g\in
B_b(\R^d), t\ge0, x\in\R^d,\end{equation} where
\begin{equation}\label{modisemgroup11} P_t^1g(x)=\Ee(g(X_t^x)\I_{\{N_t\ge1\}}).\end{equation}
According to all the statements above, we know that for any $g\in
B_b(\R^d)$ and $x\in \R^d$,
$$\aligned P_t^1g(x)=&\Ee\left( \I_{\{N_t\ge1\}}g\Big(T_tx+  \overline{ X_t^{\varepsilon}
}+X_t^{\varepsilon,0}\Big)
\right)\\
=&\Ee\left( \I_{\{N_t\ge1\}}g\Big(T_tx+  \overline{
X_t^{\varepsilon}
}+\sum_{k=1}^{N_t}T_{t-\sum_{i=1}^k\tau_i}U_k\Big)
\right)\\
=&\Ee\left(\I_{\{N_t\ge1\}} g\Big(T_tx+  \overline{
X_t^{\varepsilon}
}+\sum_{k=1}^{N_t-1}T_{t-\sum_{i=1}^k\tau_i}U_k+T_{t-\sum_{i=1}^{N_t}\tau_i}U_{N_t}\Big)
\right)\\
=&\frac{1}{C_\varepsilon}\Ee\left(\I_{\{N_t\ge1\}}\int_{\R^d}
g\Big(T_tx+ \overline{ X_t^{\varepsilon}
}+\sum_{k=1}^{N_t-1}T_{t-\sum_{i=1}^k\tau_i}U_k+T_{t-\sum_{i=1}^{N_t}\tau_i}z\Big)\,\nu_\varepsilon(dz)\right).\endaligned$$
Therefore, for any $x,y\in \R^d,$
$$\aligned &\big|P_t^1g(x)-P_t^1g(y)\big|\\
&=\frac{1}{C_\varepsilon}\Bigg[\Ee\left(\I_{\{N_t\ge1\}}\int_{\R^d}
g\Big(T_tx+ \overline{ X_t^{\varepsilon}
}+\sum_{k=1}^{N_t-1}T_{t-\sum_{i=1}^k\tau_i}U_k+T_{t-\sum_{i=1}^{N_t}\tau_i}z\Big)\,\nu_\varepsilon(dz)\right)\\
&\qquad\quad-\Ee\left(\I_{\{N_t\ge1\}}\int_{\R^d}  g\Big(T_ty+
\overline{ X_t^{\varepsilon}
}+\sum_{k=1}^{N_t-1}T_{t-\sum_{i=1}^k\tau_i}U_k+T_{t-\sum_{i=1}^{N_t}\tau_i}z\Big)\,\nu_\varepsilon(dz)\right)\Bigg]\\
&=\frac{1}{C_\varepsilon}\Bigg\{\Ee\Bigg[\I_{\{N_t\ge1\}}\\
&\qquad \quad\times\!\!\bigg(\int_{\R^d}\!g\Big(T_ty\!+\!\!
\overline{ X_t^{\varepsilon}
}\!+\!\sum_{k=1}^{N_t-1}T_{t-\sum_{i=1}^k\tau_i}U_k\!+\!T_{t-\sum_{i=1}^{N_t}\tau_i}
\!\!\Big(z+T_{\sum_{i=1}^{N_t}\tau_i}(x-y)\Big)\Big)\,\nu_\varepsilon(dz)\\
&\qquad\qquad\quad -\int_{\R^d}g\Big(T_ty+ \overline{
X_t^{\varepsilon}
}+\sum_{k=1}^{N_t-1}T_{t-\sum_{i=1}^k\tau_i}U_k+T_{t-\sum_{i=1}^{N_t}\tau_i}z\Big)\,\nu_\varepsilon(dz)\bigg)\Bigg]\Bigg\}\\
&=\frac{1}{C_\varepsilon}\Bigg\{\Ee\Bigg[\I_{\{N_t\ge1\}}\\
&\qquad\quad\times\!\!\bigg(\int_{\R^d}g\Big(T_ty\!+\!\!
\overline{ X_t^{\varepsilon}
}\!+\!\!\sum_{k=1}^{N_t-1}T_{t-\sum_{i=1}^k\tau_i}U_k\!+\!\!T_{t-\sum_{i=1}^{N_t}\tau_i}z\Big)\,\nu_\varepsilon\big(dz-T_{\sum_{i=1}^{N_t}\tau_i}(x-y)\big)\\
&\qquad\qquad\quad -\int_{\R^d}g\Big(T_ty+ \overline{
X_t^{\varepsilon}
}+\sum_{k=1}^{N_t-1}T_{t-\sum_{i=1}^k\tau_i}U_k+T_{t-\sum_{i=1}^{N_t}\tau_i}z\Big)\,\nu_\varepsilon(dz)\bigg)\Bigg]\Bigg\}\\
&\le \frac{(1-e^{-C_\varepsilon
t})}{C_\varepsilon}\|g\|_{\infty}\sup_{z\in \R^d, |z|\le
c|x-y|}\|\nu_\varepsilon-\delta_z*\nu_\varepsilon\|_{\var}\\
&\le \|g\|_{\infty}\frac{c\Gamma}{C_\varepsilon}|x-y|.\endaligned$$
Here, in the first inequality we have used the facts that
$\Pp(N_t\ge1)=1-e^{-C_\varepsilon t}$ for $t\ge0$, and
$\|T_t\|_{\R^d\to\R^d}\le c$ for all $t\ge 0$; and in the last
inequality we set
$$\Gamma:=\sup_{\rho>0} \left[\frac{\inf\limits_{|x|\le \rho}\|\nu
_\varepsilon-
(\delta_{x}*\nu_\varepsilon)\|_{_{\var}}}{\rho}\right],$$ which is
finite due to \eqref{eq2122ss} and the fact that
 $$\sup_{|x|\le \rho}\|\nu
_\varepsilon- (\delta_{x}*\nu_\varepsilon)\|_{_{\var}}\le
2C_\varepsilon,\quad \rho>0, \varepsilon>0.$$ On the other hand,
we have
$$|\Ee(g(X_t^x)\I_{\{N_t=0\}})|\le\|g\|_{\infty}
e^{-C_\varepsilon t},\quad t\ge0, g\in B_b(\R^d).$$ Combining all
the estimates with \eqref{coupgeneral11}, we get that for any
$x,y\in \R^d,$
$$\big|P_tg(x)-P_tg(y)\big|\le 2\|g\|_{\infty}e^{-C_\varepsilon
t}+ \frac{ c\Gamma }{C_\varepsilon}\|g\|_{\infty}|x-y|.$$

Having all the conclusions above at hand, we can follow the proof
of \cite[Theorem 1.3]{WW} to get the desired assertion. Since
$\|T_t\|_{\R^d\to \R^d}\le c e^{-\lambda t}$ for all $t\ge0$ and
some constants $c,\lambda>0$, it follows from \eqref{ou1ou} that
$$|X_t^x-X_t^y|\le ce^{-\lambda t}
|x-y|,\quad x,y\in \R^d, t\ge0.$$ Therefore, for any $0<s<t$ and
$x,y\in\R^d,$
$$\aligned\big|P_tg(x)-P_tg(y)\big|&=\Ee\big|P_sg(X_{t-s}^x)-P_sg(X_{t-s}^y)\big|\\
&\le 2\|g\|_{\infty}e^{-C_\varepsilon s}+
\frac{c\Gamma}{C_\varepsilon}\|g\|_{\infty}
|X_{t-s}^x-X_{t-s}^y|\\
&\le c_1\|g\|_{\infty} (1+|x-y|) \big(e^{-C_\varepsilon s}\vee
e^{-\lambda (t-s)}\big)
\endaligned$$ holds for some constant $c_1>0$. Setting $s=\frac{\lambda t}{C_\varepsilon +\lambda}$, we get
the required assertion.% for $t\ge
%\frac{C_\varepsilon+\lambda}{\lambda}$. Since the conclusion is
%trivially satisfied for $t$ small enough, the proof is completed.
\end{proof}

According to the proof above, under condition \eqref{eq2122ss} one
can get the following gradient estimates for a modified version
$(P_t^1)_{t\ge0}$ of $(P_t)_{t\ge0}$ (see \eqref{modisemgroup11}):
$$\sup_{t\ge0,\|g\|_\infty=1,x\in\R^d}|\nabla P_t^1g(x)|:=\sup_{t\ge0,\|g\|_\infty=1,x\in\R^d}\limsup_{x\to
y}\frac{|P_t^1g(y)-P_t^1g(x)|}{|y-x|}<\infty.$$ Such estimates have
been considered in \cite[Theorem 3.1]{W2} and \cite[Proposition
4.1]{WW} by using the formula for random shifts of the compound
Poisson measures, when the L\'{e}vy measure is required to have
absolutely continuous lower bounds with respect to some nice
reference measures, e.g.\ the Lebesgue measure on $\R^d$ or the
Gaussian measure on the Wiener space. Here, our condition
\eqref{eq2122ss} is more general and the proof is more direct.

\medskip

A close inspection of the proof of Theorem \ref{coupgeneral} shows

\begin{corollary}\label{th2gradient}Let $X=(X^x_t)_{t\ge0}$ be the Ornstein-Uhlenbeck process given by
\eqref{ou1ou}, where the real parts of all eigenvalues of $A$ are
non-positive and all purely imaginary eigenvalues of $A$ are
semisimple. If there exists a finite measure $\mu$ on $\R^d$ such that $\nu\ge \mu$ and
 \begin{equation*}\label{eq2122}\limsup_{\rho\to0} \left[\frac{\sup\limits_{ |x|\le \rho}\|\mu-
(\delta_{x}*\mu)\|_{_{\var}}}{\rho}\right]<\infty,\end{equation*}
then there exist two constants $\kappa, C>0 $ such that for all
$x,y\in\R^d$ and $t>0$,
$$    \|P_t(x,\cdot)-P_t(y,\cdot)\|_{\var}\le
{C(1+|x-y|)}\exp\left(-\kappa\,t\right).$$
\end{corollary}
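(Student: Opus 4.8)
The plan is to re-run the argument proving the second assertion of Theorem \ref{coupgeneral}, with the finite measure $\mu$ in the role there played by $\nu_\varepsilon$. The only structural input of that proof is a splitting of the driving L\'evy process into \emph{independent} pieces, $Z_t=Z_t^\mu+(Z_t-Z_t^\mu)$, with $Z^\mu$ a compound Poisson process whose L\'evy measure is the chosen reference measure and $Z-Z^\mu$ carrying the remaining jumps and the Gaussian part. Such a splitting is available in our setting: since $\nu\ge\mu$ and $\mu$ is finite, $\mu\ll\nu$ with Radon--Nikodym density bounded by $1$, so the Poisson jump measure of $Z$ may be thinned with acceptance probability $\frac{d\mu}{d\nu}$; the accepted jumps form a compound Poisson process $Z^\mu$ with L\'evy measure $\mu$, the rejected jumps carry L\'evy measure $\nu-\mu$ (a genuine L\'evy measure because $\int_{z\ne0}(1\wedge|z|^2)\,(\nu-\mu)(dz)\le\int_{z\ne0}(1\wedge|z|^2)\,\nu(dz)<\infty$), and the two parts are independent. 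In particular $\overline{X}_t:=\int_0^tT_{t-s}\,d(Z_s-Z_s^\mu)$ is independent of $Z^\mu$, which is exactly the independence exploited in the proof of Theorem \ref{coupgeneral}.

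Granting this, I would write $Z_t^\mu=\sum_{i=1}^{N_t}U_i$ with $(N_t)$ a Poisson process of intensity $C_\mu:=\mu(\R^d)$, $(U_i)$ i.i.d.\ of law $\mu/C_\mu$, independent of the exponential interarrival times $(\tau_i)$, and record that, on $\{N_t\ge1\}$, $\int_0^tT_{t-s}\,dZ_s^\mu=\sum_{k=1}^{N_t}T_{t-\sum_{i=1}^k\tau_i}U_k$. Using the decomposition $P_tg(x)=\Ee\bigl(g(X_t^x)\I_{\{N_t=0\}}\bigr)+P_t^1g(x)$, $P_t^1g(x):=\Ee\bigl(g(X_t^x)\I_{\{N_t\ge1\}}\bigr)$, and, for $P_t^1$, conditioning on everything but the last accepted jump and performing the change of variables $z\mapsto z+T_{\sum_{i=1}^{N_t}\tau_i}(x-y)$ (which turns $T_ty$ into $T_tx$), together with $\Pp(N_t\ge1)=1-e^{-C_\mu t}$ and $\|T_t\|_{\R^d\to\R^d}\le c$ for all $t\ge0$, I would obtain
$$\bigl|P_t^1g(x)-P_t^1g(y)\bigr|\le\frac{1-e^{-C_\mu t}}{C_\mu}\,\|g\|_\infty\sup_{|z|\le c|x-y|}\|\mu-\delta_z*\mu\|_{\var}\le\frac{c\,\Gamma}{C_\mu}\,\|g\|_\infty\,|x-y|,$$
where $\Gamma:=\sup_{\rho>0}\rho^{-1}\sup_{|x|\le\rho}\|\mu-\delta_x*\mu\|_{\var}<\infty$: the assumed $\limsup$ bound on $\mu$ controls this quotient as $\rho\to0$, while the trivial estimate $\sup_{|x|\le\rho}\|\mu-\delta_x*\mu\|_{\var}\le2C_\mu$ controls it for $\rho$ away from $0$. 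Since $\bigl|\Ee\bigl(g(X_t^x)\I_{\{N_t=0\}}\bigr)\bigr|\le\|g\|_\infty e^{-C_\mu t}$, this gives $\bigl|P_tg(x)-P_tg(y)\bigr|\le 2\|g\|_\infty e^{-C_\mu t}+\frac{c\Gamma}{C_\mu}\|g\|_\infty|x-y|$.

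Finally, to turn this into decay in $t$, I would apply the Markov property $P_t=P_sP_{t-s}$ together with the contraction $|X_{t-s}^x-X_{t-s}^y|=|T_{t-s}(x-y)|\le ce^{-\lambda(t-s)}|x-y|$ — here, as in the second assertion of Theorem \ref{coupgeneral}, one uses that the real parts of all eigenvalues of $A$ are negative, so $\|T_u\|_{\R^d\to\R^d}\le ce^{-\lambda u}$ — which yields $\bigl|P_tg(x)-P_tg(y)\bigr|\le c_1\|g\|_\infty(1+|x-y|)\bigl(e^{-C_\mu s}\vee e^{-\lambda(t-s)}\bigr)$; choosing $s=\lambda t/(C_\mu+\lambda)$ and taking the supremum over $\|g\|_\infty\le1$ gives $\|P_t(x,\cdot)-P_t(y,\cdot)\|_{\var}\le C(1+|x-y|)e^{-\kappa t}$ with $\kappa=C_\mu\lambda/(C_\mu+\lambda)$. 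The one step requiring genuine care is the thinning decomposition of the first paragraph, which replaces the explicit splitting $Z=Z^\varepsilon+(Z-Z^\varepsilon)$ available for $\nu_\varepsilon$ in Theorem \ref{coupgeneral}; once it is secured, every estimate in that proof carries over verbatim with $(\nu_\varepsilon,C_\varepsilon)$ replaced by $(\mu,C_\mu)$, and I expect no further difficulty.
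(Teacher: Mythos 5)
Your proposal is correct and follows exactly the route the paper intends: the paper offers no separate argument beyond ``a close inspection of the proof of Theorem \ref{coupgeneral}'', and your write-up is precisely that proof rerun with $(\mu,C_\mu)$ in place of $(\nu_\varepsilon,C_\varepsilon)$, with the one genuinely new ingredient --- the thinning/splitting $Z=Z^\mu+(Z-Z^\mu)$ into independent parts, valid because $\nu\ge\mu$ and $\mu$ is finite so $\nu-\mu$ is again a L\'evy measure --- correctly identified and justified. One remark: your final step rightly uses $\|T_u\|_{\R^d\to\R^d}\le ce^{-\lambda u}$, i.e.\ strictly negative real parts of the eigenvalues of $A$, which is what the second assertion of Theorem \ref{coupgeneral} assumes; the corollary's displayed hypothesis (``non-positive and semisimple'') is too weak for the exponential conclusion, so you are right to flag and use the stronger condition.
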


\bigskip

The following estimate $\|P_t(x,\cdot)-P_t(y,\cdot)\|_{\var}$ for large values of $t$ is based on the characteristic exponent $\Phi(\xi)$ of the L\'evy process $(Z_t)_{t\ge0}$.

\begin{theorem}\label{coup111}
   Let $X=(X^x_t)_{t\ge0}$ be a $d$-dimensional Ornstein-Uhlenbeck
process determined by \eqref{ou1}, where the real parts of all
eigenvalues of $A$ are negative. Assume that the L\'{e}vy measure
$\nu$ of $Z$ satisfies $\int_{\{|z|\ge1\}}\log
(1+|z|)\,\nu(dz)<\infty$, and the associated symbol $\Phi$ fulfills
\begin{equation*}\label{coup1erg2}
       \liminf\limits_{|\xi|\rightarrow\infty}\frac{\Re\Phi\big(\xi\big)}{\log (1+|\xi|)} >0.
\end{equation*}
   Then there exist $t_1,C>0$ such that for any $x,y\in\R^d$ and $t\ge t_1$,
\begin{equation*}\label{coup3}
    \|P_t(x,\cdot)-P_t(y,\cdot)\|_{\var}
    \le C|e^{tA}(x-y)|\,\varphi^{-1}_t(1),
\end{equation*} where for $t, \rho>0$,
$$
    \varphi_t(\rho)
    :=\sup_{|\xi|\le \rho}\int_0^t\Re\Phi\big(e^{sA^\top}\xi\big)\,ds,
$$
and $A^\top$ denotes the transpose of the matrix $A$.
\end{theorem}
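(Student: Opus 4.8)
The plan is to reduce everything to a Fourier estimate for the single measure $\mmu_t$. By \eqref{eq4} one has $P_t(x,\cdot)=\delta_{e^{tA}x}*\mmu_t$, so translating the two transition probabilities by $-e^{tA}y$ gives
$$\|P_t(x,\cdot)-P_t(y,\cdot)\|_{\var}=\|\delta_v*\mmu_t-\mmu_t\|_{\var},\qquad v:=e^{tA}(x-y),$$
and it suffices to bound the total-variation modulus of continuity $\|\delta_v*\mmu_t-\mmu_t\|_{\var}$ by $C|v|\,\varphi_t^{-1}(1)$ for $t$ large. Since the integrand in \eqref{ou1ou} is deterministic, the characteristic function of $\mmu_t$, read off from the L\'evy--Khintchine formula \eqref{ou2}, is $\widehat{\mmu_t}(\xi)=\exp(-\Psi_t(\xi))$ with $\Psi_t(\xi):=\int_0^t\Phi(e^{sA^\top}\xi)\,ds$; in particular $|\widehat{\mmu_t}(\xi)|=e^{-\Re\Psi_t(\xi)}$, with $\Re\Psi_t\ge0$, and $\varphi_t(\rho)=\sup_{|\xi|\le\rho}\Re\Psi_t(\xi)$.

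Next I would turn the standing assumptions into a quantitative smoothing statement for $\mmu_t$. Since all eigenvalues of $A$ have negative real part, $\|e^{sA^\top}\|\le ce^{-\lambda s}$ and $|e^{sA^\top}\xi|\ge e^{-s\|A\|}|\xi|$ for all $s\ge0$; together with the hypothesis $\Re\Phi(\eta)\ge\beta\log(1+|\eta|)$ for $|\eta|$ large, integrating the curve $s\mapsto e^{sA^\top}\xi$ as it runs down through all scales in $[R_0,|\xi|]$ yields, for $t$ large, a lower bound of the form $\Re\Psi_t(\xi)\ge c\,(\log|\xi|)^2$ on a long range of frequencies and $\Re\Psi_t(\xi)\ge c\,t\log|\xi|$ for very large frequencies. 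Hence there is $t_1$ such that for $t\ge t_1$ the function $\widehat{\mmu_t}$ decays faster than any polynomial and $\mmu_t$ has a bounded continuous density $p_t$. To be allowed to differentiate such a density I would first truncate the large jumps: write $Z=Z^{(s)}+Z^{(b)}$ with $Z^{(b)}$ the compound Poisson process carrying the jumps of size exceeding a parameter $R=R_t$, so that $\mmu_t=\mmu_t^{(s)}*\mmu_t^{(b)}$ with $\mmu_t^{(b)}$ a probability measure and $\mmu_t^{(s)}$ the OU-integral of $Z^{(s)}$; the symbol $\Phi^{(s)}$ of $Z^{(s)}$ is $C^\infty$ with polynomially growing derivatives, $\Re\Phi^{(s)}$ still has the same logarithmic lower bound, so $p_t^{(s)}$ is a Schwartz function. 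Then, using the mean value theorem,
$$\|\delta_v*\mmu_t-\mmu_t\|_{\var}\le\|\delta_v*\mmu_t^{(s)}-\mmu_t^{(s)}\|_{\var}=\|p_t^{(s)}(\cdot-v)-p_t^{(s)}\|_{L^1}\le|v|\,\|\nabla p_t^{(s)}\|_{L^1},$$
so the whole statement is reduced to $\|\nabla p_t^{(s)}\|_{L^1}\le C\,\varphi_t^{-1}(1)$.

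For this last estimate I would combine a Plancherel splitting with a splitting of the Fourier integral at the scale $\rho_\ast:=\varphi_t^{-1}(1)$. Writing $\partial_j p_t^{(s)}=\mathcal F^{-1}(i\xi_j\widehat{\mmu_t^{(s)}})$, bounding $\int_{|x|\le M}|\partial_j p_t^{(s)}|\le(c_d M^d)^{1/2}\big(\int|\xi|^2 e^{-2\Re\Psi_t^{(s)}(\xi)}\,d\xi\big)^{1/2}$ together with the rapidly decaying tail $\int_{|x|>M}|\partial_j p_t^{(s)}|$, and optimising in $M$, reduces matters to bounding $\int|\xi|^2 e^{-2\Re\Psi_t^{(s)}(\xi)}\,d\xi$. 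Choosing $R_t$ so that $2t\,\nu(\{|z|>R_t\})$ is as small as required keeps $\Re\Psi_t^{(s)}$ comparable to $\Re\Psi_t$ (hence $\varphi_t^{(s)}$ comparable to $\varphi_t$, thanks to the uniform-in-$t$ logarithmic growth of $\varphi_t$), and then I split at $\rho_\ast$: on $\{|\xi|\le\rho_\ast\}$ the crude bound $e^{-2\Re\Psi_t^{(s)}}\le1$ gives a contribution of order $\rho_\ast^{\,d+2}$, while on $\{|\xi|>\rho_\ast\}$ the lower bounds for $\Re\Psi_t$ from the previous step — crucially, pointwise lower bounds along rays, not merely for the running supremum — make the integrand so small that the tail is again of that order. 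Collecting the pieces gives $\|\nabla p_t^{(s)}\|_{L^1}\le C\,\varphi_t^{-1}(1)$, whence the theorem.

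The main obstacle — and the reason the running supremum $\varphi_t$, rather than $\Re\Psi_t$ itself, appears in the statement — is that $\Re\Phi$, hence $\Re\Psi_t$, need not be monotone along rays, so ``the scale at which $\widehat{\mmu_t}$ begins to decay'' is not a level set of $\Re\Psi_t$. What rescues the argument is the time-integration defining $\Psi_t$: it averages $\Re\Phi$ over a whole band of scales, so the large-frequency lower bound $\Re\Phi(\eta)\ge\beta\log(1+|\eta|)$ does propagate to a genuine (indeed super-logarithmic) pointwise lower bound for $\Re\Psi_t$ once $t$ is large. Making this propagation quantitative — strong enough to control $\int_{|\xi|>\rho_\ast}|\xi|^2 e^{-2\Re\Psi_t^{(s)}(\xi)}\,d\xi$ by a power of $\rho_\ast$, while at the same time ensuring that the large-jump truncation (which is forced on us because, under only $\int_{\{|z|\ge1\}}\log(1+|z|)\,\nu(dz)<\infty$, the symbol $\Phi$ need not be $C^1$) does not spoil the comparison between $\varphi_t^{(s)}$ and $\varphi_t$ — is where the real work lies.
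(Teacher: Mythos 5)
Your overall strategy --- reduce matters to $\|\delta_v*\mmu_t-\mmu_t\|_{\var}\le C|v|\,\varphi_t^{-1}(1)$ with $v=e^{tA}(x-y)$, and prove this via a gradient estimate for the density of a truncated $\mmu_t$ --- is reasonable, but it is a genuinely different route from the paper's, which does no Fourier analysis at all: the paper invokes \cite[Theorem 1.7]{SW2} and only checks its two hypotheses, namely (a) local boundedness of $\xi\mapsto\int_0^\infty\Re\Phi(e^{sA^\top}\xi)\,ds$, deduced from the existence of the invariant measure under the log-moment condition via \cite{SA1}, and (b) $\liminf_{|\xi|\to\infty}\int_0^{t_0}\Re\Phi(e^{sA^\top}\xi)\,ds/\log(1+|\xi|)>2d+2$ for some $t_0$, deduced from the pointwise hypothesis by Fatou's lemma; the drift and Gaussian part are then removed by factorizing $P_t=Q_tR_t$ and using that convolution with a probability measure contracts total variation (the same inequality you use for the large jumps). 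In effect you are re-proving the cited theorem; note that you never actually use the hypothesis $\int_{\{|z|\ge1\}}\log(1+|z|)\,\nu(dz)<\infty$, which in the paper is precisely what gives (a) and hence the lower bound $\varphi_t^{-1}(1)\ge\varphi_\infty^{-1}(1)>0$ that your constants implicitly rely on.

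The concrete gap sits in the step you yourself defer as ``the real work'': the uniform-in-$t$ bound $\|\nabla p_t^{(s)}\|_{L^1}\le C\varphi_t^{-1}(1)$. As you set it up, the cut-off $R_t$ must satisfy $t\,\nu(\{|z|>R_t\})\le c$, so $R_t\to\infty$ whenever $\nu$ has unbounded support; under only a logarithmic moment the truncated first moment $\int_{\{1<|z|\le R_t\}}|z|\,\nu(dz)$ --- and a fortiori the higher moments entering $D^k\widehat{\mmu_t^{(s)}}$ --- can grow super-polynomially in $t$ (e.g.\ like $e^{\sqrt{t}}/t$ when $\nu(\{|z|>r\})\asymp(\log r)^{-2}$). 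These moments enter the bounds for $\nabla\Psi_t^{(s)}$ and hence every weighted-$L^2$ estimate you need for the spatial tail $\int_{\{|x|>M\}}|\nabla p_t^{(s)}|$, and no choice of $M$ then reconciles that tail with the main term $M^{d/2}\bigl(\int|\xi|^2e^{-2\Re\Psi_t^{(s)}(\xi)}\,d\xi\bigr)^{1/2}$: the estimate degenerates as $t\to\infty$. The fix is simple but absent from your plan: either keep the cut-off radius fixed (comparability of $\Re\Psi_t^{(s)}$ with $\Re\Psi_t$ is not needed, since $\Re\Phi^{(s)}\ge\Re\Phi-2\nu(\{|z|>R\})$ already inherits the logarithmic lower bound, which is all your frequency-side estimates use), or, better, split the stochastic integral at time $t-t_1$ to write $\mmu_t=\mmu_{t_1}*\sigma_t$ with $\sigma_t$ a probability measure and contract once more, so that the entire gradient estimate has to be proved only at the single fixed time $t_1$, where $\varphi_t^{-1}(1)\asymp\varphi_{t_1}^{-1}(1)$ for all $t\ge t_1$ anyway.
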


\begin{proof} We first assume that the L\'{e}vy process $(Z_t)_{t\ge 0}$ is a pure jump process, i.e.\ $Q=0$ and $b=0$ in \eqref{ou2}. According to \cite[Theorem 1.7]{SW2}, it suffices to verify that  \begin{equation*}\label{coup4}\xi\mapsto\int_0^\infty\Re\Phi\big(
e^{sA^\top}\xi\big)\,ds \quad\textrm{ is locally
bounded,}\end{equation*} and there exists some $t_0 > 0$ such that
    \begin{equation*}\label{coup1}
       \liminf\limits_{|\xi|\rightarrow\infty}\frac{\int_0^{t_0}\Re\Phi\big( e^{sA^\top}\xi\big)\,ds}{\log (1+|\xi|)} >2d+2.
\end{equation*}

First, since the driving L\'evy process $(Z_t)_{t\ge0}$ has no
Gaussian part, according to \cite[Theorem 4.1]{SA1} (or
\cite[Proposition 2.2]{Mu}) and the assumptions, the
process $(X_t)_{t\ge0}$ possesses an invariant measure $\mu$, which
is an infinite divisible distribution with the characteristic
exponent $\xi\mapsto\int_0^\infty\Phi\big( e^{sA^\top}\xi\big)\,ds$. In
particular, the function $$\xi \mapsto \int_0^\infty\Re\Phi\big(
e^{sA^\top}\xi\big)\,ds$$ is well defined and locally bounded.

On the other hand, set
$$c_0:=\liminf\limits_{|\xi|\rightarrow\infty}\frac{\Re\Phi\big(\xi\big)}{\log (1+|\xi|)} >0.$$ Choosing $t_0> \frac{2d+2}{c_0},$ we have
$$\aligned \varliminf\limits_{|\xi|\rightarrow\infty}\frac{\int_0^{t_0}\Re\Phi\big( e^{sA^\top}\xi\big)\,ds}{\log (1+|\xi|)}&\ge\varliminf\limits_{|\xi|\rightarrow\infty}\int_0^{t_0}\!\!\frac{\Re\Phi\big( e^{sA^\top}\xi\big)}{\log (1+|e^{sA^\top}\xi|)}\,ds\,\frac{\inf_{0<s<t_0}\log (1+|e^{sA^\top}\xi|)}{\log(1+|\xi|)}\\
&\ge \int_0^{t_0}\!\!\varliminf\limits_{|\xi|\rightarrow\infty}\frac{\Re\Phi\big( e^{sA^\top}\xi\big)}{\log (1+|e^{sA^\top}\xi|)}\,ds\\
&>2d+2,
\endaligned$$ where the second inequality follows from the Fatou lemma and the fact that
$$\lim_{|\xi|\to \infty}\frac{\inf_{0<s<t_0}\log (1+|e^{sA^\top}\xi|)}{\log(1+|\xi|)}=1.$$
This proves the required assertion.

Next, we consider the general case. Let $(Y_t)_{t\ge0}$ and
$(Z_t)_{t\ge0}$ be two independent L\'evy processes, whose symbols
are  $$\Phi_Y(\xi)
     =\int_{z\neq 0} \Bigl(1-e^{i\scalp{\xi}{z}}+i\scalp{\xi}{z}\I_{B(0,1)}(z)\Bigr)\nu(dz), $$ and $$\Phi_Z(\xi)=\Phi(\xi)-\Phi_Y(\xi),$$ respectively.
 Denote by $Q_{t}$ and $Q_{t}(x,\cdot)$ the semigroup and the transition function of the $d$-dimensional Ornstein-Uhlenbeck
process driven by $(Y_t)_{t\ge0}$. Similarly,  $R_{t}$ and
$R_{t}(x,\cdot)$ stand for the semigroup and the transition function
of the $d$-dimensional Ornstein-Uhlenbeck process driven by
$(Z_t)_{t\ge0}$. Note that $Q_{t}(x,\cdot)$ is the transition kernel
of an Ornstein-Uhlenbeck process driven by pure jump L\'{e}vy
process. Then,
\begin{equation*}\label{pcoup0}\begin{aligned}
    \|P_t(x,\cdot)-P_t(y,\cdot)\|_{\var}
    &= \sup_{\|f\|_\infty\le 1}\big|P_tf(x)-P_tf(y)\big|\\
    &= \sup_{\|f\|_\infty\le 1} \big|Q_t R_t f(x)-Q_t R_tf(y)\big|\\
    &\le \sup_{\|g\|_\infty\le 1} \big|Q_tg(x)-Q_tg(y)\big|\\
    &= \|Q_t(x,\cdot)-Q_t(y,\cdot)\|_{\var}.
\end{aligned}\end{equation*}
This, along with the conclusion above for $Q(x,dz)$, completes the
proof.
\end{proof}
\section{Proofs}\label{section5} In the section, we will apply the
results in Section \ref{section3} to study the ergodicity and the
exponential ergodicity for Ornstein-Uhlenbeck processes. It is well known that the coupling property along with
the existence of an stationary measure can yield the ergodicity
for the process, which gives us the motivation of the proof of
Theorem \ref{erg1}.

\begin{proof}[Proof of Theorem \ref{erg1}] As mentioned in Theorem 1.0, according to
\cite[Theorem 4.1]{SA1} or \cite[Proposition 2.2]{Mu}, the process
$X$ has an invariant measure $\mu$. In particular, $\mu P_t(\cdot,dz)=\mu(dz)$ for
any $t>0$, where $P_t(x,dz)$ is the transition kernel of
the process $X$. On the other hand, by Theorem \ref{coupgeneral},
\eqref{erg11} and the condition that the real parts of all
eigenvalues of $A$ are negative imply that there exists $C_1>0$ such
that for any $t>0$ and $x,y\in\R^d$,
$$\|P_t(x,\cdot)-P_t(y,\cdot)\|_{\var}\le \frac{C_1(1+|x-y|)}{\sqrt{t}}.$$ That is, when $t\to \infty$,
$\|P_t(x,\cdot)-P_t(y,\cdot)\|_{\var}$ converges to zero uniformly
for all $x,y\in \R^d$ with bounded $|x-y|$. Note that for any
$x\in \R^d$ and $t>0$,
$$\|P_t(x, \cdot)-\mu\|_{\var}\le \int\|P_t(x, \cdot)-P_t(y,\cdot)\|_{\var}\,\mu(dy).$$ This along with the statement above
gives us that for any $x\in \R^d$,
$$\lim_{t\to\infty}\|P_t(x, \cdot)-\mu\|_{\var}=0.$$ We mention here that the proof above also yields the
uniqueness of invariant measure. Indeed, let $\mu_1$ and $\mu_2$
be invariant measures for the process $X$. Then,
$$\|\mu_1-\mu_2\|_{\var}\le \int\|P_t(x,
\cdot)-P_t(y,\cdot)\|_{\var}\,\mu_1(dx)\,\mu_2(dy).$$ Combining with the proof above and letting $t\to\infty$ show that $\mu_1=\mu_2$. This proves the first required assertion.

For the second assertion, by \eqref{erg12} and Theorem
\ref{coupgeneral}, we know that there exist $\theta,C_2>0$ such that for
any $t>0$ and $x,y\in\R^d$,
$$\|P_t(x,\cdot)-P_t(y,\cdot)\|_{\var}\le {C_2(1+|x-y|)}e^{-\theta
t}.$$ We will claim that, under the assumption
$\int_{\{|z|\ge1\}}|z|\,\nu(dz)<\infty,$
\begin{equation}\label{prooffff}\int
|x|\,\mu(dx)<\infty.\end{equation} If this holds, then following
the argument above, we have
$$\aligned\|P_t(x, \cdot)-\mu\|_{\var}&\le \int\|P_t(x, \cdot)-P_t(y,\cdot)\|_{\var}\,\mu(dy)\\
&\le {C_2(1+|x|)}e^{-\theta t}\int|y|\,\mu(dy)\\
&\le C_3(1+|x|)e^{-\theta t}.\endaligned$$ The required assertion
follows.

Next, we turn to prove \eqref{prooffff}. For $t>0$, set $T_t=e^{tA}$ and
$Y_t=\int_0^t T_{t-s}\, dZ_s$. For simplicity, we assume that $Z$
is a L\'{e}vy process on $\R^d$ without the Gaussian part. Thus,
according to the L\'{e}vy-It\^{o} decomposition (see \cite[Chapter 4]{Sa}), there are $b\in \R^d$ and a Poisson random measure $N$
on $[0,\infty)\times \R^d\setminus\{0\}$ with intensity measure
$ds\otimes \nu(dz)$ (where $ds$ is the Lebesgue measure on
$[0,\infty)$) such that
$$dZ_s=b\,ds+\int_{\{|z|\le 1\}}z \,\widetilde{N}(ds, dz)+\int_{\{|z|> 1\}}z \,N(ds, dz),$$ where $\widetilde{N}(ds,dz)$ is the compensated Poisson measure on $[0,\infty)\times \R^d\setminus\{0\}$, i.e.\ $$\widetilde{N}(ds,dz)=N(ds,dz)-ds\,\nu(dz).$$ Hence, the integral $Y_t$ is defined by
$$Y_t=\int_0^t  T_{t-s} b\,ds+\int_0^t\int_{\{|z|\le 1\}}T_{t-s}z \,\widetilde{N}(ds, dz)+ \int_0^t\int_{\{|z|> 1\}}T_{t-s}z \,N(ds, dz).$$ Since the real parts of all eigenvalues of $A$ are
negative, $\|T_t\|_{\R^d\to\R^d}\le ce^{-\lambda t}$ for all $t>0$ and some constants $c, \lambda>0$, e.g.\ see \cite[(2.8)]{SA1}. Thus, for any $t>0$,
$$\bigg|\int_0^t  T_{t-s} b\,ds\bigg|\le \int_0^t \|T_{t-s}\|_{\R^d \to \R^d} |b|\,ds\le c|b|\int_0^t e^{-\lambda (t-s)}\,ds\le \frac{c}{\lambda} |b|,$$ and,
by using the Cauchy-Schwarz inequality and the fact that
$\widetilde{N}(ds,dz)$ is a square integrable martingale measure,
cf.\ see \cite[Chapter 4.2]{App1},
$$\aligned \Ee\bigg|\int_0^t\int_{\{|z|\le 1\}}T_{t-s}z  \,\widetilde{N}(ds, dz)\bigg|&\le  \bigg(\Ee\bigg|\int_0^t\int_{\{|z|\le 1\}}T_{t-s}z  \,\widetilde{N}(ds, dz)\bigg|^2\bigg)^{1/2}\\
&=\bigg(\int_0^t\int_{\{|z|\le 1\}} |T_{t-s}z|^2\,\nu(dz)\,ds\bigg)^{1/2}\\
&\le\bigg(\int_0^t\|T_{t-s}\|_{\R^d\to \R^d}^2\,ds\int_{\{|z|\le 1\}} |z|^2\,\nu(dz)\bigg)^{1/2}\\
&\le\frac{c}{\lambda}\sqrt{\int_{\{|z|\le 1\}} |z|^2\,\nu(dz)}
.\endaligned$$ On the other hand, noting that the integral
$\int_0^t\int_{\{|z|> 1\}}T_{t-s}z \,N(ds, dz)$ is defined as Remain
integral and $\int_0^t\int_{\{|z|> 1\}}z \,N(ds, dz)$ is a compound
Poisson process with intensity $\nu(\{z\in \R^d, |z|>1\}),$ it
follows from the argument in \cite[Chapter 4.3.5]{App1} that
$\int_0^t\int_{\{|z|> 1\}}T_{t-s}z \,N(ds, dz)$ is an infinitely
divisible random variable associated with a L\'{e}vy measure
$$\nu_t(D):=\int_0^t \int T_{t-s}^{-1}({D\cap \{z\in \R^d:
|z|>1\}})\,\nu(dz)\quad\textrm{ for } D\in \R^d\setminus\{0\}.$$ Thus,
$$\aligned\int  \,\nu_t(dz)=&\int_0^t \int_{\{| T_{t-s}z|>1\}}\,\nu(dz)\,ds\\
\le& \int_0^t \int_{\{|z|>c^{-1}\}}\,\nu(dz)\,ds\\
\le& t\int_{\{|z|>c^{-1}\}}\,\nu(dz).\endaligned$$ That is, for any
$t>0$, the L\'{e}vy measure $\nu_t$ is a finite measure.

Therefore,
$\int_0^t\int_{\{|z|> 1\}}T_{t-s}z \,N(ds, dz)$ can be regarded as
random variable $E_{t,1}$ for some compound Poisson process
$(E_{t,s})_{s\ge0}$ with bounded L\'{e}vy measure $\nu_t$. According
to the explicit expression of the semigroup for compound Poisson
process, see the proof of \cite[Theorem 25.3]{Sa} or
\cite[(2.1)]{SW1}, we get that
$$\Ee \bigg|\int_0^t\int_{\{|z|> 1\}}T_{t-s}z \,N(ds, dz)\bigg|\le \sum^\infty_{n=0}\frac{1}{n!}\int |z| \,{\nu_t^*}^{n}(dz)\le \sum^\infty_{n=0}\frac{1}{n!}\Big(\int |z| \,\nu_t(dz)\Big)^n,$$ where ${\nu_t^*}^{n}$ is the $n$-fold convolution of $\nu_t$ and ${\nu_t^*}^0=\delta_0.$ By using the fact that for any $t>0$, $$\aligned\int |z| \,d\nu_t(dz)=&\int_0^t \int_{\{| T_{t-s}z|>1\}}|T_{t-s}z|\,\nu(dz)\,ds\\
\le& \int_0^t \|T_{t-s}\|_{\R^d\to \R^d}\,ds\int_{\{|z|>c^{-1}\}} |z|\,\nu(dz)\\
\le& \frac{c}{\lambda}\int_{\{|z|>c^{-1}\}}|z|\,\nu(dz),\endaligned$$ we arrive at
$$\Ee \bigg|\int_0^t\int_{\{|z|> 1\}}T_{t-s}z \,N(ds, dz)\bigg|\le\exp \bigg(\frac{c}{\lambda}\int_{\{|z|>c^{-1}\}}|z|\,\nu(dz)\bigg).$$
Combining with all the conclusions above, we get that $\Ee
\big|Y_t\big|$ is bounded uniformly for all $t>0$, i.e. $\sup_{t>0}\Ee\big|Y_t\big|\le C_0$ for some absolutely constant $C_0$.

Furthermore, by \eqref{ou1ou}, for any $m\ge1$ and $t>0$,
$$|X_t^x|\wedge m\le |T_tx|\wedge m+|Y_t|,$$ and so
$$\Ee \big( |X_t^x|\wedge m\big)\le \Ee \big( |T_tx|\wedge m\big)+\Ee |Y_t|\le \Big(ce^{-\lambda t} |x|\Big)\wedge m+C_0.$$ Integrating this inequality with $\mu(dx)$, we get that
$$\mu(|x|\wedge m)\le \mu\Big[\Big(ce^{-\lambda t} |x|\Big)\wedge m\Big]+C_0,\quad t>0, m\ge1.$$ Letting first $t\to \infty$ and then $m\to \infty$, we prove the required assertion \eqref{prooffff}. The proof is finished.
\end{proof}

We note that the argument of Theorem \ref{erg1} above yields that:

\medskip

\begin{corollary}{Let $X=(X^x_t)_{t\ge0}$ be a $d$-dimensional Ornstein-Uhlenbeck
process determined by \eqref{ou1}, where the real parts of all
eigenvalues of $A$ are negative. If the L\'{e}vy measure $\nu$ of
the L\'{e}vy process $Z$ satisfies \eqref{erg11} and
$\int_{\{|z|\ge1\}}|z|\,\nu(dz)<\infty,$ then the process $X$ is
ergodic in the sense of algebraic convergence, i.e.\ there exist a
unique invariant measure $\mu$ and a positive constant $C$ such that
for any $x\in\R^d$ and $t>0$,
$$\|P_t(x, \cdot)-\mu\|_{\var}\le \frac{C (1+|x|)}{\sqrt{t}}.$$}\end{corollary}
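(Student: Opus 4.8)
The plan is to assemble three facts, all of which are already available in the paper: the existence of an invariant measure $\mu$, the algebraic coupling bound from Theorem~\ref{coupgeneral}, and the finiteness of the first moment $\int_{\R^d}|x|\,\mu(dx)$. First I would observe that $\int_{\{|z|\ge1\}}|z|\,\nu(dz)<\infty$ forces $\int_{\{|z|\ge1\}}\log(1+|z|)\,\nu(dz)<\infty$, so Theorem~1.0 applies and yields an invariant measure $\mu$ with $\mu P_t(\cdot,dz)=\mu(dz)$ for every $t>0$; this also means that uniqueness need only be checked at the very end.

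Next I would invoke the first assertion of Theorem~\ref{coupgeneral}: the hypothesis \eqref{erg11} is exactly \eqref{eq21}, and the real parts of all eigenvalues of $A$ are negative, so there is a constant $C_1>0$ with $\|P_t(x,\cdot)-P_t(y,\cdot)\|_{\var}\le C_1(1+|x-y|)/\sqrt t$ for all $x,y\in\R^d$ and $t>0$. The only further ingredient needed is a finite first moment for $\mu$: I would reuse the estimate \eqref{prooffff}, namely $\int_{\R^d}|x|\,\mu(dx)<\infty$, which was established in the proof of Theorem~\ref{erg1} under precisely the assumption $\int_{\{|z|\ge1\}}|z|\,\nu(dz)<\infty$, via the uniform bound $\sup_{t>0}\Ee|Y_t|<\infty$ for $Y_t=\int_0^t e^{(t-s)A}\,dZ_s$ together with passing to the limit $t\to\infty$ and then $m\to\infty$ in $\Ee(|X_t^x|\wedge m)\le (ce^{-\lambda t}|x|)\wedge m+\Ee|Y_t|$.

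Then the conclusion follows from the standard estimate $\|P_t(x,\cdot)-\mu\|_{\var}\le\int_{\R^d}\|P_t(x,\cdot)-P_t(y,\cdot)\|_{\var}\,\mu(dy)$, which comes from the invariance of $\mu$ and Jensen's inequality: plugging in the coupling bound and using $\int_{\R^d}(1+|x-y|)\,\mu(dy)\le 1+|x|+\int_{\R^d}|y|\,\mu(dy)$ gives $\|P_t(x,\cdot)-\mu\|_{\var}\le C(1+|x|)/\sqrt t$. Uniqueness is then obtained exactly as in Theorem~\ref{erg1}: for two invariant measures $\mu_1,\mu_2$ — each with finite first moment by the previous step — one has $\|\mu_1-\mu_2\|_{\var}\le\int_{\R^d}\int_{\R^d}\|P_t(x,\cdot)-P_t(y,\cdot)\|_{\var}\,\mu_1(dx)\,\mu_2(dy)\to0$ as $t\to\infty$, whence $\mu_1=\mu_2$.

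I do not expect a genuine obstacle here, since Theorem~\ref{coupgeneral} and the moment bound \eqref{prooffff} already do all the real work; the only point worth flagging is that keeping the explicit $1/\sqrt t$ rate — rather than merely passing to the limit as in the first part of Theorem~\ref{erg1} — is what forces one to carry along the finiteness of $\int_{\R^d}|x|\,\mu(dx)$, and this is exactly why the hypothesis $\int_{\{|z|\ge1\}}|z|\,\nu(dz)<\infty$ here replaces the weaker log-moment condition of assertion~(1). If anything requires attention it is checking that the final constant $C$ is independent of $x$, which is immediate from the displayed bound above.
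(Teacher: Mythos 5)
Your proposal is correct and follows exactly the route the paper intends: the paper states this corollary with the remark that ``the argument of Theorem \ref{erg1} above yields'' it, i.e.\ one combines the algebraic coupling bound from the first assertion of Theorem \ref{coupgeneral} (valid under \eqref{erg11}) with the moment estimate \eqref{prooffff} (valid under $\int_{\{|z|\ge1\}}|z|\,\nu(dz)<\infty$) and the inequality $\|P_t(x,\cdot)-\mu\|_{\var}\le\int\|P_t(x,\cdot)-P_t(y,\cdot)\|_{\var}\,\mu(dy)$. Your additional observations (that the first-moment condition implies the log-moment condition needed for existence of $\mu$, and that uniqueness follows as in Theorem \ref{erg1}) are accurate and complete the argument in the same way the paper does.
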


Furthermore, according to Corollary \ref{th2gradient} and the proof of Theorem \ref{erg1}, we have the following conclusion for the exponential ergodicity of Ornstein-Uhlenbeck
processes, which improves the second assertion in Theorem \ref{erg1}.

\begin{theorem}\label{thehreheh} Let $X=(X^x_t)_{t\ge0}$ be a $d$-dimensional Ornstein-Uhlenbeck
process determined by \eqref{ou1}, where the real parts of all eigenvalues of $A$ are
negative and the L\'{e}vy
measure $\nu$ of the L\'{e}vy process $Z$ satisfies $\int_{\{|z|\ge1\}}|z|\,\nu(dz)<\infty.$ If there exists a finite measure $\mu$ on $\R^d$ such that $\nu\ge \mu$ and
 \begin{equation*}\label{eq2122}\limsup_{\rho\to0} \left[\frac{\sup\limits_{ |x|\le \rho}\|\mu-
(\delta_{x}*\mu)\|_{_{\var}}}{\rho}\right]<\infty,\end{equation*}
then there are a unique invariant measure $\mu$ and two constants
$\kappa,C>0$ such that for any $x\in\R^d$ and $t>0$,
$$\|P_t(x, \cdot)-\mu\|_{\var}\le C (1+|x|)\exp\left(-\kappa\,t\right).$$  \end{theorem}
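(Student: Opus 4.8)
The plan is to derive the exponential ergodicity by combining two ingredients already available in the paper: the exponential coupling estimate of Corollary~\ref{th2gradient}, and the uniform first-moment bound for the invariant measure extracted inside the proof of Theorem~\ref{erg1}. First I would note that, since $\log(1+|z|)\le|z|$, the hypothesis $\int_{\{|z|\ge1\}}|z|\,\nu(dz)<\infty$ implies $\int_{\{|z|\ge1\}}\log(1+|z|)\,\nu(dz)<\infty$, so Theorem~1.0 (i.e.\ \cite[Theorem~4.1]{SA1}, see also \cite[Proposition~2.2]{Mu}) applies and produces an invariant measure; to avoid clashing with the reference sub-measure $\mu$ in the hypothesis I would denote this invariant measure by $\pi$, so that $\pi P_t(\cdot,dz)=\pi(dz)$ for all $t>0$. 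Since the real parts of all eigenvalues of $A$ are negative, the eigenvalue hypotheses of Corollary~\ref{th2gradient} hold a fortiori, and together with $\nu\ge\mu$ and the stated $\limsup$ condition that corollary gives constants $\kappa,C_1>0$ such that
$$\|P_t(x,\cdot)-P_t(y,\cdot)\|_{\var}\le C_1(1+|x-y|)e^{-\kappa t},\qquad x,y\in\R^d,\,t>0.$$

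Next I would establish $\int|y|\,\pi(dy)<\infty$. This is exactly the estimate \eqref{prooffff} proved inside Theorem~\ref{erg1} under the same hypothesis $\int_{\{|z|\ge1\}}|z|\,\nu(dz)<\infty$, and I would reuse that argument verbatim: writing $X_t^x=e^{tA}x+Y_t$ with $Y_t=\int_0^te^{(t-s)A}\,dZ_s$, one splits $Y_t$ through the L\'evy--It\^o decomposition into a drift part, a compensated small-jump part, and a large-jump part, and bounds each in $L^1$ using $\|e^{tA}\|_{\R^d\to\R^d}\le ce^{-\lambda t}$, the $L^2$-isometry of the compensated Poisson integral, and the compound-Poisson series expansion, respectively, to obtain $\sup_{t>0}\Ee|Y_t|\le C_0<\infty$; then from $\Ee(|X_t^x|\wedge m)\le(ce^{-\lambda t}|x|)\wedge m+C_0$, integrating against $\pi(dx)$ and letting first $t\to\infty$ and then $m\to\infty$ yields $\int|y|\,\pi(dy)\le C_0$.

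Finally, using $\pi P_t=\pi$ together with the triangle inequality for the total variation norm, for every $x\in\R^d$ and $t>0$,
$$\aligned\|P_t(x,\cdot)-\pi\|_{\var}&\le\int\|P_t(x,\cdot)-P_t(y,\cdot)\|_{\var}\,\pi(dy)\\
&\le C_1(1+|x|)e^{-\kappa t}+C_1e^{-\kappa t}\int|y|\,\pi(dy)\le C(1+|x|)e^{-\kappa t},\endaligned$$
which is the desired bound; uniqueness follows by the same device, since any two invariant measures $\pi_1,\pi_2$ satisfy $\|\pi_1-\pi_2\|_{\var}\le\int\|P_t(x,\cdot)-P_t(y,\cdot)\|_{\var}\,\pi_1(dx)\,\pi_2(dy)\to0$ as $t\to\infty$. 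I do not expect a genuine obstacle here: the statement is essentially a repackaging of Corollary~\ref{th2gradient} (which already supplies the exponential contraction between arbitrary starting points) and of the moment computation performed in the proof of Theorem~\ref{erg1} (which controls the tail of $\pi$); the only point needing care is the notational overload of $\mu$ in the statement, which I resolve by relabelling the invariant measure as $\pi$.
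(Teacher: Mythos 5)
Your proposal is correct and follows exactly the route the paper intends: the paper gives no separate proof of this theorem but derives it precisely from Corollary~\ref{th2gradient} (exponential coupling under the lower-bound measure condition) combined with the moment estimate \eqref{prooffff} established in the proof of Theorem~\ref{erg1}, then integrates the coupling bound against the invariant measure. Your handling of the notational overload of $\mu$ by relabelling the invariant measure as $\pi$ is a sensible clarification of the paper's statement.
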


\bigskip

The proof of Theorem \ref{erg2333} is based on the following lemma.

\begin{lemma}\label{erg2}Let $X=(X^x_t)_{t\ge0}$ be a $d$-dimensional Ornstein-Uhlenbeck
process determined by \eqref{ou1}, where the real parts of all
eigenvalues of $A$ are negative, and the symbol of the L\'{e}vy
process $Z$ satisfies  \begin{equation}\label{coup1erg2}
       \liminf\limits_{|\xi|\rightarrow\infty}\frac{\Re\Phi\big(\xi\big)}{\log (1+|\xi|)} >0.
\end{equation} If there exists a constant $0<\alpha\le1$ such that the L\'{e}vy measure $\nu$ of
L\'{e}vy process $Z$ satisfies
$\int_{\{|z|\ge1\}}|z|^\alpha\,\nu(dz)<\infty$, then there are a
unique invariant measure $\mu$ and two constants $\kappa,C>0$ such
that for any $x\in\R^d$ and $t>0$,
$$\|P_t(x, \cdot)-\mu\|_{\var}\le C (1+|x|^\alpha)\exp\left(-\kappa\,t\right).$$ \end{lemma}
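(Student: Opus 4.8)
The plan is to combine the coupling estimate of Theorem~\ref{coup111} with the finiteness of the $\alpha$-th moment of the invariant measure, following the scheme of the proof of Theorem~\ref{erg1}. First observe that $\int_{\{|z|\ge1\}}|z|^\alpha\,\nu(dz)<\infty$ forces $\int_{\{|z|\ge1\}}\log(1+|z|)\,\nu(dz)<\infty$, since $\log(1+r)\le C_\alpha r^\alpha$ for all $r\ge1$. Hence Theorem~1.0 provides an invariant probability measure $\mu$ with $\mu P_t=\mu$ for every $t>0$, and Theorem~\ref{coup111} applies, yielding constants $t_1,C>0$ with $\|P_t(x,\cdot)-P_t(y,\cdot)\|_{\var}\le C\,|e^{tA}(x-y)|\,\varphi_t^{-1}(1)$ for all $t\ge t_1$ and $x,y\in\R^d$.

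I would next turn this into a genuine exponential bound valid for all $t>0$. Because $\Re\Phi\ge0$, the map $t\mapsto\varphi_t(\rho)$ is non-decreasing for each $\rho$, so $t\mapsto\varphi_t^{-1}(1)$ is non-increasing and in particular $\varphi_t^{-1}(1)\le\varphi_{t_1}^{-1}(1)<\infty$ for $t\ge t_1$; moreover, since all eigenvalues of $A$ have negative real parts, $|e^{tA}v|\le c\,e^{-\lambda t}|v|$ for some $c,\lambda>0$ (see \cite[(2.8)]{SA1}). Combining these and absorbing the compact range $0<t<t_1$ into a constant by means of the trivial bound $\|P_t(x,\cdot)-P_t(y,\cdot)\|_{\var}\le2$, I obtain $\|P_t(x,\cdot)-P_t(y,\cdot)\|_{\var}\le c'(1+|x-y|)\,e^{-\lambda t}$ for all $t>0$. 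Interpolating this inequality with the bound $\le2$ --- raising the former to the power $\alpha$ and the latter to the power $1-\alpha$, and using $(1+r)^\alpha\le1+r^\alpha$ --- gives
$$\|P_t(x,\cdot)-P_t(y,\cdot)\|_{\var}\le C\,(1+|x-y|^\alpha)\,e^{-\kappa t},\qquad t>0,\ x,y\in\R^d,$$
with $\kappa:=\alpha\lambda$.

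The second ingredient is $\int_{\R^d}|x|^\alpha\,\mu(dx)<\infty$, which I would prove exactly as in the proof of Theorem~\ref{erg1}, with the first moment replaced by the $\alpha$-th moment. Writing $X_t^x=e^{tA}x+Y_t$ with $Y_t=\int_0^t e^{(t-s)A}\,dZ_s$, and splitting $Y_t$ via the L\'evy--It\^o decomposition into the drift term, a square-integrable martingale term (the Brownian part together with the compensated small jumps), and the large-jump term $\int_0^t\int_{\{|z|>1\}}e^{(t-s)A}z\,N(ds,dz)$, one bounds $\Ee|Y_t|^\alpha$ uniformly in $t>0$: the drift term deterministically; the martingale term by Jensen's inequality ($\alpha\le1\le2$) together with the It\^o isometry, as in Theorem~\ref{erg1}; and the large-jump term --- which is infinitely divisible with a finite L\'evy measure $\nu_t$ --- by using the subadditivity of $r\mapsto r^\alpha$ (this is the step where $\alpha\le1$ is essential), which gives $\int|z|^\alpha\,\nu_t^{*n}(dz)\le n\,\nu_t(\R^d)^{n-1}\int|z|^\alpha\,\nu_t(dz)$, so that the relevant series converges and is dominated, uniformly in $t$, by a multiple of $\int|z|^\alpha\,\nu_t(dz)\le\frac{c^\alpha}{\alpha\lambda}\int_{\{|z|\ge c^{-1}\}}|z|^\alpha\,\nu(dz)<\infty$. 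Thus $C_0:=\sup_{t>0}\Ee|Y_t|^\alpha<\infty$, whence $\Ee(|X_t^x|^\alpha\wedge m)\le(c^\alpha e^{-\alpha\lambda t}|x|^\alpha)\wedge m+C_0$ for every $m\ge1$; integrating against $\mu$, using $\mu P_t=\mu$, and letting first $t\to\infty$ (dominated convergence) and then $m\to\infty$ (monotone convergence) gives $\int_{\R^d}|x|^\alpha\,\mu(dx)\le C_0$.

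Finally I would put the two parts together: using $\mu=\mu P_t$, the convexity of $\|\cdot\|_{\var}$, and $|x-y|^\alpha\le|x|^\alpha+|y|^\alpha$,
$$\|P_t(x,\cdot)-\mu\|_{\var}\le\int\|P_t(x,\cdot)-P_t(y,\cdot)\|_{\var}\,\mu(dy)\le C\,e^{-\kappa t}\Big(1+|x|^\alpha+\int|y|^\alpha\,\mu(dy)\Big)\le C'(1+|x|^\alpha)\,e^{-\kappa t};$$
applying the same chain to two invariant probability measures $\mu_1,\mu_2$ (both of finite $\alpha$-th moment, by the previous step) and letting $t\to\infty$ forces $\|\mu_1-\mu_2\|_{\var}=0$, i.e.\ uniqueness. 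I expect the only non-routine step to be the uniform-in-$t$ control of the $\alpha$-th moment of the large-jump component of $Y_t$, which is exactly where the hypotheses $0<\alpha\le1$ and $\int_{\{|z|\ge1\}}|z|^\alpha\,\nu(dz)<\infty$ are used; the coupling half and the concluding computations are routine adaptations of arguments already carried out for Theorems~\ref{coup111} and~\ref{erg1}.
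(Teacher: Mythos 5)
Your proof is correct, and it reaches the conclusion by a route that is close to, but not identical with, the paper's. The paper does not integrate the coupling bound against $\mu$; instead it writes $|P_tg(x)-P_{t+s}g(x)|\le\Ee|P_tg(x)-P_tg(X_s^x)|$, interpolates the Lipschitz-type bound of Theorem \ref{coup111} (raised to the power $\alpha$) against the trivial bound $2\|g\|_\infty$ (raised to the power $1-\alpha$) inside the expectation, bounds $\Ee|X_s^x-x|^\alpha$ uniformly in $s$ by the same moment estimate you use for $Y_s$, and then lets $s\to\infty$ using $P_{t+s}(x,\cdot)\to\mu$. This sidesteps the need to know that $\mu$ has a finite $\alpha$-moment. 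You instead establish $\int|y|^\alpha\,\mu(dy)<\infty$ by the truncation argument from the proof of Theorem \ref{erg1} and then integrate the two-point coupling estimate against $\mu(dy)$ --- exactly the template of Theorem \ref{erg1}(2), transplanted to the $\alpha$-moment setting. Both arguments rest on the same two pillars (Theorem \ref{coup111} plus $\sup_{s>0}\Ee|Y_s|^\alpha<\infty$) and on the same $\alpha$ versus $1-\alpha$ interpolation, applied at slightly different places. Your route costs the extra step of proving the moment bound for $\mu$, but buys a self-contained uniqueness argument and avoids invoking the setwise convergence $P_{t+s}(x,\cdot)\to\mu$ together with the lower semicontinuity of the total variation norm, which the paper uses implicitly when letting $s\to\infty$. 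One small remark: your treatment of the large-jump term, via $\int|z|^\alpha\,\nu_t^{*n}(dz)\le n\,\nu_t(\R^d)^{n-1}\int|z|^\alpha\,\nu_t(dz)$, is the right way to obtain a bound uniform in $t$, but only if you retain the prefactor $e^{-\nu_t(\R^d)}$ from the compound Poisson formula; without it the series grows like $e^{\nu_t(\R^d)}$, and $\nu_t(\R^d)$ increases linearly in $t$.
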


\begin{proof} For $t, \rho>0$, define
$$
    \varphi_t(\rho)
    :=\sup_{|\xi|\le \rho}\int_0^t\Re\Phi\big(e^{sA^\top}\xi\big)\,ds.
$$
According to Theorem \ref{coup111} and the Markov property, there exists a constant $t_1>0$ such that
for any $t\ge t_1$,
$s>0$ and $g\in B_b(\R^d)$,
$$\aligned |P_tg(x)-P_{t+s}g(x)|=&|\Ee (P_tg(x)-P_tg(X_s^x))|\\
\le &\Ee|P_tg(x)-P_tg(X_s^x)|\\
=&\Ee\bigg(\frac{|P_tg(x)-P_tg(X_s^x)|^\alpha}{|X_s^x-x|^\alpha} |P_tg(x)-P_tg(X_s^x)|^{1-\alpha}|X_s^x-x|^\alpha\bigg)\\
\le &C_t^\alpha \|g\|_{\infty}^\alpha \times  (2\|g\|_{\infty})^{1-\alpha}\times\Ee\big( |X_s^x-x|^\alpha\big)\\
=&2^{1-\alpha}C_t^\alpha\|g\|_{\infty}
\Ee\big(|T_sx-x+Y_s|^\alpha\big).
\endaligned$$ where $T_sx=e^{sA}x$, $Y_s=\int_0^s T_{s-u}\,dZ_u$ and
$$C_t=\|T_t\|_{\R^d\to\R^d}\,\varphi^{-1}_t(1).$$

Since the real parts of all eigenvalues of $A$ are
negative, $\|T_t\|_{\R^d\to\R^d}\le ce^{-\lambda t}$ for all $t>0$ and some constants $c, \lambda>0$, e.g.\ see \cite[(2.8)]{SA1}. Therefore, for any $\alpha\in(0,1]$, $$
\Ee\big(|T_sx-x+Y_s|^\alpha\big)\le\Ee\big((|T_sx-x|+|Y_s|)^\alpha\big)\le
|T_sx-x|^\alpha+\Ee|Y_s|^\alpha$$ and $$|T_sx-x|^\alpha\le
|T_sx|^\alpha+|x|^\alpha\le
(1+c^\alpha)|x|^\alpha,$$ where we have used the fact that
$$(a+b)^\alpha\le a^\alpha+b^\alpha,\quad a,b\ge0.$$ On the other hand, under the assumption that $\int_{\{|z|\ge 1\}}|z|^\alpha\nu(dz)<\infty$, one can follow the proof of \eqref{prooffff} to verify that $\Ee|Y_s|^\alpha$ is uniformly bounded for all $s>0$, i.e.\ $\sup_{s>0}\Ee|Y_s|^\alpha<\infty.$ Therefore, there exists a constant $C_0>0$ such that   $$\sup_{s>0}\Ee\big(|T_sx-x+Y_s|^\alpha\big)\le C_0(1+|x|^\alpha).$$ Combining with all the conclusions above, we get that
$$|P_tg(x)-P_{t+s}g(x)|\le 2^{1-\alpha}C_0(1+|x|^\alpha)C_t^\alpha\|g\|_{\infty}.$$ That is,
$$\|P_t(x,\cdot)-P_{t+s}(x,\cdot)\|_{\var}\le  2^{1-\alpha}C_0(1+|x|^\alpha)C_t^\alpha.$$ Letting $s\to \infty$ and noting that $\mu$ is the invariant measure of the process $X$, $$\|P_t(x,\cdot)-\mu\|_{\var}\le  2^{1-\alpha}C_0(1+|x|^\alpha)C_t^\alpha.$$

As mentioned above, $\|T_t\|_{\R^d\to\R^d}\le ce^{-\lambda t}$ for all $t>0$. On the other hand, since for $t\ge t_1$,
  $\varphi_t(\rho)\ge \varphi_{t_1}(\rho)$ and $\lim\limits_{\rho\to \infty}\varphi_{t_1}(\rho)=\infty$, it holds that $\varphi^{-1}_t(1)\le \varphi^{-1}_{t_1}(1)<\infty$ for any $t\ge t_1$. Therefore, there exists $C>0$ such that for any $t\ge t_1$, $C_t^\alpha\le Ce^{-\lambda t}$, which along with the conclusion above yields the required assertion.
\end{proof}

\begin{proof}[Proof of Theorem \ref{erg2333}] According to Lemma \ref{erg2}, it is sufficient to verify that
\eqref{coup1erg2333} implies that \eqref{coup1erg2}. For any
$\xi\in\R^d$ with $|\xi|$ large enough,
\begin{align*}\Re\Phi(\xi)
     =&\int_{z\neq 0} \Bigl(1-\cos{ \langle{\xi},{z}\rangle}\Bigr)\,\nu(dz)\\
     \ge&\int_{0<|z|\le /1|\xi|} \Bigl(1-\cos{ \langle{\xi},{z}\rangle}\Bigr)\,\nu(dz)\\
     \ge& \frac{\cos 1}{2}\int_{0<|z|\le /1|\xi|} \langle{\xi},{z}\rangle^2\,\nu(dz),\end{align*} where in the second inequality we have used the inequality that
      $$1-\cos r\ge \frac{\cos 1}{2} r^2, \quad |r|\le1.$$ This follows the desired assertion. \end{proof}

\acks
We would like to thank an anonymous referee whose careful reading helped to improve the presentation of this paper.
Financial support through National Natural Science Foundation of China (No.\ 11126350) and the Programme of Excellent Young Talents in Universities
of Fujian (No.\ JA10058 and JA11051)
is gratefully
acknowledged.
% Place the text of your acknowledgements after the \acks command.
% \acks generates the heading "Acknowledgements".
% If you wish to make only one acknowledgement, use \ack.
% \ack generates the heading "Acknowledgement".

% Reference list
%
% References should be in the following form (or the BibTeX file
% apt.bst should be used):
%
% For a journal:
% Surname, Initial (year). Title of paper. {\em Journal title}
% {\bf Vol,} page--range.
%
% For a book:
% Surname, Initial (year). {\em Book title}. Publisher, Address.
%
% Note the following example of a reference list.


\begin{thebibliography}{99}
\footnotesize
\bibitem{App1}
{\sc Applebaum, D.} (2005). \emph{L\'{e}vy Processes and Stochastic Calculus}. Cambridge University Press,
Cambridge.


\bibitem{BSW}
{\sc B\"{o}ttcher, B., Schilling, R.L.\ and Wang, J.} (2011). Constructions of
coupling processes for L\'{e}vy processes. \emph{Stoch. Proc. Appl.}, \textbf{121},
1201--1216.


\bibitem{BK}
{\sc Bodnarchuk, S.V. and Kulik, A.M.} (2008). Conditions for existence and smoothness of the distribution density for an Ornstein-Uhlenbeck process with L\'{e}vy noise. \emph{Theor. Probab. Math. Stat.} \textbf{79},
20--33.


\bibitem{Ci}
{\sc Cerrai, S.} (2001). \emph{Second Order PDE's in Finite and Infinite Dimension: a Probabilistic Approach}. Lecture Notes in Math.\ vol. {\bf 1762}, Springer-Verlag, Berlin.

\bibitem{Mu}
{\sc Masuda, H.} (2004). On multidimensional Ornstein-Uhlenbeck processes driven
by a general L\'{e}vy process. \emph{Bernoulli} \textbf{10},
97--120.

\bibitem{MT}
{\sc Meyn, S.P. and Tweedie, R.L.} (1993). \emph{Markov Chains and Stochastic Stability}. Springer-Verlag, London.


\bibitem{PZ}
{\sc Priola, E.\ and Zabczyk, J.} (2009). Densities for Ornstein-Uhlenbeck processes with jumps. \emph{Bull. Lond. Math. Soc.} \textbf{41}, 41--50.

\bibitem{PZ1}
{\sc Priola, E.\ and Zabczyk, J.} (2008). On linear evolution equations with
cylindrical L\'{e}vy noise, in: \emph{Stochastic Partial
Differential Equations and Applications VIII}, eds. Da Prato, G. and
Tubaro, L.. Proceedings of a conference on SPDEs held in Levico, Quaderni di Mathematica, Seconda Universit\`{a} di Napoli,
see also arXiv:0908.0356

\bibitem{SA1}
{\sc Sato, K.\ and Yamazato, M.} (1984). Operator-self-decomposable distributions
as limit distributions of processes of Ornstein-Uhlenbeck type.
\emph{Stoch.\ Proc.\ Appl.} \textbf{17}, 73--100.


\bibitem{Sa}
{\sc Sato, K.} (1999). \emph{L\'{e}vy processes and Infinitely Divisible
Distributions}. Cambridge University Press,
Cambridge.



\bibitem{SSW}
{\sc Schilling, R.L., Sztonyk, P.\ and Wang, J.} (2012). Coupling property and gradient estimates of L\'evy processes via symbol. to
appear in \emph{Bernoulli}, see also arXiv 1011.1067

\bibitem{SW1}
{\sc Schilling, R.L.\ and Wang, J.} (2011). On the coupling property of L\'evy
processes. \emph{Ann.\ Inst.\ Henri Poincar\'e: Probab.\ Stat.},
\textbf{47}, 1147--1159.

\bibitem{SW2}
{\sc Schilling, R.L.\ and Wang, J.} (2012). On the coupling property and the Liouville theorem for Ornstein-Uhlenbeck processes. \emph{J. Evol. Equ.}, \textbf{12}, 119--140.

\bibitem{S}
{\sc Simon, T.} (2011). On the absolute continuity of multidimensional Ornstein-Uhlenbeck processes. \emph{Probab. Theroy Relat. Fields},
\textbf{151}, 173--190.

\bibitem{T}
{\sc Stettner, L.} (1994). Remarks on ergodic conditions for Markov processes on Polish spaces. \emph{Bull. Polish Acad. Sci. Math.},
\textbf{42}, 103--114.

\bibitem{W1}
{\sc Wang, F.-Y.} (2011). Coupling for Ornstein-Uhlenbeck jump processes.
\emph{Bernoulli} \textbf{17}, 1136--1158.

\bibitem{W2}
{\sc Wang, F.-Y.} (2011). Gradient estimate for Ornstein-Uhlenbeck jump processes.
\emph{Stoch. Proc. Appl.}, \textbf{121}, 466--478.

\bibitem{WW}
{\sc Wang, F.-Y. and Wang, J.} (2011). Coupling and strong Feller for jump
processes on Banach spaces. \emph{Preprint}, see also arXiv
1111.3795

\end{thebibliography}
\end{document}